\newcommand{\ds}{\displaystyle}
\newcommand{\R}{\mathbb{R}}
\newcommand{\E}{\mathbb{E}}
\newcommand{\Pa}{\mathbb{P}}
\newtheorem{theorem}{Theorem}
\newtheorem{lemma}[theorem]{Lemma}
\newtheorem{proposition}[theorem]{Proposition}
\newtheorem{remark} {Remark}
\def\H{{\mathcal H}}
\newcommand{\Om}{\Omega}
\newcommand{\lb}{\lambda}
\newcommand{\Hm}{{\mathcal H}^{m-1}}
\newcommand{\sm}{\setminus}
\newcommand{\sq}{\subseteq}
\newcommand{\vphi}{\varphi}
\newcommand{\bp}{\begin{proof}}
\newcommand{\ep}{\end{proof}}
\begin{document}
\title{On the torsion function with Robin or Dirichlet boundary conditions}
\author{{M. van den Berg} \\
School of Mathematics, University of Bristol\\
University Walk, Bristol BS8 1TW\\
United Kingdom\\
\texttt{mamvdb@bristol.ac.uk}\\
\\
{D. Bucur}\\
Laboratoire de Math\'{e}matiques, CNRS UMR 5127\\
Universit\'{e} de Savoie Campus Scientifique\\
73376 Le-Bourget-du-Lac\\
France\\
\texttt{dorin.bucur@univ-savoie.fr}}
\date{26 April 2013}\maketitle
\vskip 3truecm \indent
\begin{abstract}\noindent
For $p\in (1,+\infty)$ and $b \in (0, +\infty]$ the $p$-torsion
function with Robin boundary
 conditions associated to an arbitrary open set $\Om \subset \R^m$ satisfies formally the equation
$-\Delta_p =1$ in $\Om$ and $|\nabla u|^{p-2} \frac{\partial u
}{\partial n} + b|u|^{p-2} u =0$ on $\partial \Om$. We obtain
bounds of the $L^\infty$ norm of $u$ {\it only} in terms of the
bottom of the spectrum (of the Robin $p$-Laplacian), $b$ and the
dimension of the space in the following two extremal cases: the
linear framework (corresponding to $p=2$) and arbitrary $b>0$, and
the non-linear framework (corresponding to arbitrary $p>1$) and
Dirichlet boundary conditions ($b=+\infty$). In the general case,
$p\not=2, p \in (1, +\infty)$ and $b>0$ our bounds involve also
the Lebesgue measure of $\Om$.

\end{abstract}
\vskip 1truecm \noindent \ \ \ \ \ \ \ \  { Mathematics Subject
Classification (2000)}: 35J25, 35P99, 58J35.
\begin{center} \textbf{Keywords}: Torsion function, Robin boundary conditions, $p$-Laplacian.
\end{center}
\mbox{}
\section{Introduction\label{sec1}}
Let $\Omega$ be an open set in Euclidean space $\R^m$ with
non-empty boundary $\partial \Omega$, and let the torsion function
$u:\Omega\rightarrow \R$ be the unique weak solution of
\begin{equation*}
-\Delta u=1, \ \ \  u|_{\partial \Omega}=0.
\end{equation*}
If $\Omega$ has finite measure the solution is obtained in the
usual framework of the Lax-Milgram theorem, while if $\Omega$ has
infinite measure then $1 \notin H^{-1}(\Omega)$ and $u$ is defined
as the supremum over all balls of the torsion functions associated
to $\Omega \cap B$.

The torsional rigidity is the set function defined by
\begin{equation}\label{e2}
P(\Omega)=\int_{\Omega}u.
\end{equation}
Since $u\ge 0$ we have that $P(\Omega)$ takes values in the
non-negative extended real numbers, and that $P(\Omega)=\lVert
u\rVert_{L^1(\Omega)},$ whenever $u$ is integrable. Both the
torsion function and the torsional rigidity arise in many areas of
mathematics, for example in elasticity theory \cite{Bandle, KMN, PS}, in
heat conduction \cite{MvdB1}, in the definition of gamma
convergence \cite{BB}, in the study of minimal submanifolds
\cite{MP} etc. The connection with probability theory is as
follows. Let $(B(s), s\geq 0; \Pa_x, x \in \R^m)$ be Brownian
motion with generator $\Delta$, and let
\begin{equation*}
T_{\Omega} = \inf \left\{ s\geq 0 \colon B(s) \in
\R^m\setminus \Omega \right\}
\end{equation*} be the first exit time of Brownian
motion from $\Omega$. Then \cite{PortStone}
\begin{equation*}
u(x) = \E_x\left[ T_{\Omega}\right],\quad x \in \Omega.
\end{equation*}
Let $\lambda$ be the bottom of the spectrum of the Dirichlet Laplacian acting in $L^2(\Omega)$.

In \cite{MvdB2,MvdB3} it was shown that $u\in L^{\infty}(\Omega)$
if and only if $\lambda>0$. If $\lambda>0$ then
\begin{equation}\label{e5}
\lambda^{-1} \leq \Vert u
\Vert_{L^\infty(\Omega)} \leq \left( 4+3m\log 2\right) \lambda^{-1}.
\end{equation}
Previous results of this nature were obtained in Theorem 1 of
\cite{BC1} for open, simply connected, planar sets $\Omega$. The
question of the sharp constant in the upper bound in the right
hand side of \eqref{e5} for these sets was addressed in
\cite{BC1,BC2}.

In this paper we consider the torsion function $u_b$ for the
Laplacian with Robin boundary conditions. The Robin Laplacian is
generated by the quadratic form
\begin{equation*}
\mathcal{Q}_b(u,v)= \int_{\Omega}\nabla u.\nabla v+b\int_{\partial
\Omega}uvd\mathcal{H}^{m-1},\ u,v \in W_{2,2}^1(\Omega, \partial \Omega),
\end{equation*}
where $\mathcal{H}^{m-1}$ denotes the $(m-1)$-dimensional
Hausdorff measure on $\partial \Omega$, and $b$ is a strictly
positive constant. This quadratic form defined on
$W_{2,2}^1(\Omega,\partial \Omega)$ is closed. The unique
self-adjoint operator generated by $\mathcal{Q}_b$ is the Robin
Laplacian which formally satisfies the boundary condition
\begin{equation}\label{e7}\frac{\partial u}{\partial
n}+bu=0,\ \  x\in \partial \Omega,\end{equation} where $n$ denotes
the outward unit normal, and $\frac{\partial}{\partial n}$ is the
normal derivative. The torsion function $u_b$ is the unique weak
solution of $-\Delta u=1$ with boundary condition \eqref{e7}. For
convenience we put $q_b(u)=\mathcal{Q}_b(u,u)$. It is well known
that $W_{2,2}^1(\Omega)=W^{1,2}(\Omega)$ if $\Omega$ is bounded
and $\partial \Omega$ is Lipschitz. See \cite{M} for details.
However, as all our results are for arbitrary open sets in $\R^m$
we will not rely on this identity. We denote the bottom of the
spectrum of the Robin Laplacian acting in $L^2(\Omega)$ by
\begin{equation}\label{e8}
\lambda(\Omega,b)=\inf\{q_b(u):\lVert u \rVert_{L^2(\Omega)}=1, u\
\textup{smooth in}\ \overline{\Omega}\}.
\end{equation}
The main results of this paper are the following.
\begin{theorem}\label{The1}
Let $\Omega$ be an open set in $\R^m, m=2,3,\cdots$. The torsion
function $u_b$ is bounded if and only if $\lambda(\Omega,b)>0$. In
that case we have that
\begin{equation}\label{e9}
\lambda(\Omega,b)^{-1}\le \lVert u_b\rVert_{L^{\infty}(\Omega)}\le
6m\lambda(\Omega,b)^{-1}\log\left(2^{11}3\sqrt3m(1+b^{-1}\lambda(\Omega,b)^{1/2})\right).
\end{equation}
\end{theorem}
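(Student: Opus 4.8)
The plan is to establish both directions of the equivalence and the two-sided bound \eqref{e9} through the heat semigroup $(e^{tA_b})_{t\ge0}$ of the Robin Laplacian $A_b$ (the self-adjoint operator associated with $q_b$). Since the Robin form is a Dirichlet form, $e^{tA_b}$ is positivity preserving and sub-Markovian; in particular $v(t):=e^{tA_b}1$ satisfies $0\le v(t)\le1$, and whenever $\lambda:=\lambda(\Om,b)>0$ one has $u_b=(-A_b)^{-1}1=\int_0^\infty v(t)\,dt$.

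For the lower bound, and for the implication ``$u_b$ bounded $\Rightarrow\lambda>0$'', I would interpolate. If $M:=\|u_b\|_{L^\infty(\Om)}<\infty$ then for each $\mu>0$ we have, pointwise, $0\le(\mu-A_b)^{-1}1=\int_0^\infty e^{-\mu t}v(t)\,dt\le u_b\le M$; since $(\mu-A_b)^{-1}$ is positivity preserving with a symmetric kernel this forces $\|(\mu-A_b)^{-1}\|_{L^\infty\to L^\infty}=\|(\mu-A_b)^{-1}\|_{L^1\to L^1}\le M$, hence by Riesz--Thorin $\|(\mu-A_b)^{-1}\|_{L^2\to L^2}\le M$. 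The last norm equals $(\mu+\lambda)^{-1}$, so $(\mu+\lambda)^{-1}\le M$ for every $\mu>0$; letting $\mu\downarrow0$ gives $\lambda>0$ together with $\lambda^{-1}\le M$, which is the left inequality in \eqref{e9}. Conversely, the upper bound below shows $\lambda>0\Rightarrow M<\infty$.

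The substance is the upper bound, for which I would use three properties of $v$: $0\le v(t)\le1$; $v(t)$ is pointwise nonincreasing in $t$ (analytically because $A_b1\le0$ as a measure and $e^{tA_b}$ is positivity preserving; probabilistically because $v(t)(x)=\E_x[e^{-b\ell_t}]$ with $\ell_t$ the nondecreasing boundary local time of reflecting Brownian motion); and $\|v(s+t)\|_\infty\le\|v(s)\|_\infty\|v(t)\|_\infty$, since $v(s+t)(x)=\int p^b_s(x,y)v(t)(y)\,dy\le\|v(t)\|_\infty v(s)(x)$. If $t_0$ satisfies $\|v(t_0)\|_\infty\le\tfrac12$, these give $\|u_b\|_\infty\le\int_0^{t_0}\|v(t)\|_\infty\,dt+\sum_{k\ge1}t_0\|v(kt_0)\|_\infty\le t_0+t_0\sum_{k\ge1}2^{-k}=2t_0$, so the theorem reduces to finding such a $t_0$ with $2t_0$ at most the right side of \eqref{e9}. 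To do this I would fix $x\in\Om$, write $v(t_0)(x)=\E_x[e^{-b\ell_{t_0}}]$ for the reflecting path $(X_s)$ started at $x$, and localise to a ball $B(x,R)$: from the Skorokhod decomposition $X_s=x+B_s+\int_0^s\nu(X_r)\,d\ell_r$ with $|\nu|\equiv1$ one gets $|X_s-x|\le|B_s-x|+\ell_{t_0}$, whence
\[
v(t_0)(x)\le v_R(t_0)(x)+2m\,e^{-R^2/(8mt_0)}+e^{-bR/2},
\]
where $v_R(t_0)(x)$ is the heat content at $x$ and time $t_0$ of $-\Delta$ on $\Om\cap B(x,R)$ with the Robin condition $\partial u/\partial n+bu=0$ on $\partial\Om\cap B(x,R)$ and the Dirichlet condition on $\Om\cap\partial B(x,R)$; its bottom eigenvalue $\lambda_R$ is $\ge\lambda$ (extend competitors by zero). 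Bounding $v_R(t_0)(x)\le\|p^R_{t_0}(x,\cdot)\|_2\,|\Om\cap B(x,R)|^{1/2}$ and then $\|p^R_{t_0}(x,\cdot)\|_2\le e^{-\lambda t_0/2}p^R_{t_0}(x,x)^{1/2}$, and inserting a quantitative heat-kernel (Faber--Krahn/ultracontractivity) bound for $p^R_{t_0}(x,x)$, leaves an estimate of the shape $v_R(t_0)(x)\le e^{-\lambda t_0/2}\big(c(m)R^2/t_0\big)^{m/4}$. Choosing $R$ of order $\big(mt_0\log(c\,m)\big)^{1/2}+2b^{-1}\log c$ makes the two Gaussian/local-time terms at most $\tfrac14$, and then choosing $\lambda t_0$ of order $m\log\!\big(c\,m(1+b^{-1}\lambda^{1/2})\big)$ makes $v_R(t_0)(x)\le\tfrac14$ as well; the $b^{-1}$ term in $R$ is precisely what introduces the factor $1+b^{-1}\lambda^{1/2}$ inside the logarithm, and careful bookkeeping of the constants should reproduce the $6m$ and $2^{11}3\sqrt3$ of \eqref{e9}.

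The main obstacle is controlling $v_R(t_0)(x)$: unlike the Dirichlet heat kernel, the Robin (hence Neumann-dominated) heat kernel of the arbitrary open set $\Om\cap B(x,R)$ has no Gaussian upper bound depending only on $t$ and $m$, so the localisation cannot be closed as in the proof of \eqref{e5}. Getting past this requires genuinely using the local-time penalisation $e^{-b\ell_t}$ — either through a heat-kernel bound for the Robin Laplacian valid for all open sets, in which $b$ enters alongside $t^{1/2}$, or by estimating $v_R(t_0)(x)$ directly from the linear growth of boundary local time — with explicit constants and with the two distinct roles of $b$ (in the spectral comparison and in $R$) tracked throughout. An alternative route, also leaning on \eqref{e5}, is to write $u_b(x)=u_\infty(x)+\E_x[u_b(X_\tau)]$ with $\tau$ the first hitting time of $\partial\Om$ and then bound $\sup_{\partial\Om}u_b$; but this reduction confronts the same difficulty of controlling $u_b$ near $\partial\Om$.
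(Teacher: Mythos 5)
There is a genuine gap, and you have named it yourself: the entire quantitative content of the upper bound in \eqref{e9} is the missing estimate for $v_R(t_0)(x)$, i.e.\ an ultracontractivity/heat-kernel bound for the Robin problem on an \emph{arbitrary} open set with constants depending only on $m$, $b$ and $\lambda(\Omega,b)$. Your reduction to finding $t_0$ with $\lVert v(t_0)\rVert_{L^\infty}\le\tfrac12$ (via positivity, monotonicity and submultiplicativity of $v(t)=e^{tA_b}1$) is sound but elementary; everything specific to the theorem — in particular how $b$ enters and why the answer is $\lambda(\Omega,b)^{-1}$ times a logarithm of $1+b^{-1}\lambda(\Omega,b)^{1/2}$ — is concentrated in the step you leave open. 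The paper closes exactly this step: Lemmas \ref{Lem3}--\ref{Lem6} prove the Nash inequality \eqref{e18} with the explicit constant $\mathcal{N}_b$ of \eqref{e32}, namely $C(m)\bigl(b^{-1}+b\,\lambda(\Omega,b)^{-1}\bigr)$, resp.\ $2C(m)\lambda(\Omega,b)^{-1/2}$ when $b\ge\lambda(\Omega,b)^{1/2}$, by applying the BV isoperimetric inequality to $u^2$ extended by zero (which produces the boundary trace term matching the Robin form) and a Cauchy--Schwarz with parameter $b$, resp.\ $\lambda(\Omega,b)^{1/2}$. Inserted into Daners' Gaussian bound (Theorem 6.1 of \cite{D2}) this yields \eqref{e19}, i.e.\ $p_b(x,y;t)\le C_{2m}\mathcal{N}_b^m t^{-m}e^{-|x-y|^2/(8\omega t)}$ for every open set, and then the heat-equation argument of Lemma \ref{Lem7} plus optimisation of $T$ in \eqref{e43}--\eqref{e45} produces the stated constant. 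Without an ingredient of this kind your localisation cannot be closed, so the proposal does not prove the upper bound; it reformulates it.

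Two further points. First, your probabilistic apparatus (reflecting Brownian motion, boundary local time $\ell_t$, Skorokhod decomposition with a unit normal) presupposes boundary regularity that an arbitrary open set does not have; the theorem is stated, and the paper's argument is carried out, in the purely variational setting of $W^1_{2,2}(\Omega,\partial\Omega)$ with an $SBV$/trace argument for non-smooth $\Omega$, so even the representation $v(t)(x)=\E_x[e^{-b\ell_t}]$ would need justification in the stated generality. Second, your lower-bound argument (domination of the resolvent by $u_b$, equality of the $L^1\to L^1$ and $L^\infty\to L^\infty$ resolvent norms by kernel symmetry, Riesz--Thorin, then $\mu\downarrow0$) is a legitimate and arguably cleaner alternative to the paper's route, which instead localises to $\Omega_R=\Omega\cap B(p,R)$, integrates the heat kernel of the mixed Robin/Dirichlet problem against its first (bounded) eigenfunction to get \eqref{e50}, and lets $R\to\infty$; it simultaneously gives the implication that boundedness of $u_b$ forces $\lambda(\Omega,b)>0$, though for infinite-measure $\Omega$ you should still justify the pointwise domination $(\mu-A_b)^{-1}1\le u_b$ within whatever definition of $u_b$ you adopt there.
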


For $b\rightarrow \infty$ we have that
$\lambda(\Omega,b)\rightarrow \lambda$, and we recover \eqref{e5}
with an albeit worse constant. For $b \rightarrow 0$ we have that
$\lambda(\Omega,b)\rightarrow 0$. However, in the case where
$\Omega$ is a $C^{0,1}$ domain it was shown in \cite{GS} that
\begin{equation}\label{e10}
\lim_{b\rightarrow
0}b^{-1}\lambda(\Omega,b)=|\Omega|^{-1}\H^{m-1}(\partial \Omega),
\end{equation}
where $|\Omega|=\int_{\Omega}1$. (The upper bound in \eqref{e10}
follows by choosing the test function $u=|\Omega|^{-1/2}$ in
\eqref{e8}.) So for these domains
$b^{-1}\lambda(\Omega,b)^{1/2}\asymp b^{-1/2},$ and the upper
bound in Theorem \ref{The1} has an extra factor $\log b$ compared
with the Dirichlet regime $b\rightarrow \infty$. It is unclear
whether this additional $\log b$ factor is in fact sharp.

The proof of Theorem \ref{The1} depends very heavily on the
availability of Gaussian upper bounds for the Robin heat kernel.
These were obtained in great generality in \cite{D2}. We note that
the estimates obtained in \cite{D1} and \cite{D3} for elliptic
Robin boundary value problems do not seem explicit enough to keep
track of the geometric data of $\Omega$. The remainder of this
paper is organised as follows. In Section \ref{sec2} we prove
Theorem \ref{The1}. In Section \ref{sec3} we obtain some bounds
for Robin eigenfunctions in the case where $\Omega$ has finite
measure.

In Section \ref{sec4} we study the torsion function and torsional
rigidity for the $p$-Laplacian with Dirichlet and Robin boundary
conditions respectively. In particular in Theorem \ref{bebu01} we
will obtain an $L^\infty$ estimate for the torsion function of the
$p$-Laplacian with Dirichlet boundary conditions for an arbitrary
open set in terms of the corresponding spectral bottom. This
extends the upper bound in \eqref{e5} for $p=2$ to all $p>1$.
Moreover it extends the results of \cite[Theorem 13]{BE} for
convex sets to arbitrary open sets with finite or infinite
measure. In the very general case of the $p$-Laplacian with Robin
boundary conditions, we obtain $L^\infty$ bounds which hold only
on open sets with finite measure, and the constant involve the
measure of $\Om$ as well. This last result is probably not
optimal, since one may expect that the Lebesgue measure should not
enter into the constant, but we are not able to overcome a series
of technical points.

\section{Proof of Theorem 1 \label{sec2}}
The main ingredient in the proof of Theorem \ref{The1} is a
Gaussian bound for the Robin heat kernel $p_b(x,y;t), x\in \Omega,
y\in \Omega, t>0$, for arbitrary open sets $\Omega$ in $\R^m, m=2,3,\cdots$. In
the special case where $\Delta$ is the standard Laplacian and
$b>0$ is constant on $\partial \Omega$, Theorem 6.1 in \cite{D2}
reads as follows.

For all $0<\epsilon\le 1$ and for all $x\in \Omega, y\in \Omega,
t>0$
\begin{equation*}
p_b(x,y;t)\le C_{2m}(\alpha
\epsilon)^{-m}C^mt^{-m}e^{-|x-y|^2/(4\omega(1+\epsilon)t)},
\end{equation*}
where $\alpha=\min\{1,b\},$ and $C_{2m}$ and $\omega$ are constants
depending on $m$ only. $C$ is the constant which appears in the Nash
inequality
\begin{equation*}
\lVert u \rVert_{L^2(\Omega)}^{2+\frac{2}{m}}\le Cq_1(u)\lVert u
\rVert_{L^1(\Omega)}^{\frac{2}{m}},
\end{equation*}
It is straightforward to trace the $m$-dependence of $\omega$. We
find that upon consulting Lemma 6.3 and its proof in \cite {D2},
\begin{equation*}
\omega=1+m^{1/2}+4m.
\end{equation*}
Similarly we find that using Corollary 5.3 and Lemma 5.7 and their
proofs that
\begin{equation}\label{e17}
C_{2m}=(192m)^m.
\end{equation}
We can also verify that if $\mathcal{N}_b$ is a constant in the
Nash inequality
\begin{equation}\label{e18}
\lVert u \rVert_{L^2(\Omega)}^{2+\frac{2}{m}}\le
\mathcal{N}_bq_b(u)\lVert u \rVert_{L^1(\Omega)}^{\frac{2}{m}},
\end{equation}
and if we choose $\epsilon=1$ then we infer, by the previous lines,
that
\begin{equation}\label{e19}
p_b(x,y;t)\le C_{2m}\mathcal{N}_b^mt^{-m}e^{-|x-y|^2/(8\omega t)}.
\end{equation}
In Lemmas \ref{Lem3}, \ref{Lem4}, \ref{Lem5} and \ref{Lem6} below
we prove the Nash inequality \eqref{e18} with a constant
$\mathcal{N}_b$ depending upon $b, \lambda(\Omega,b)$ and $m$
only. As a first step we shall consider only open and bounded sets
$\Omega$ with a smooth boundary. By a standard density argument we
obtain the full $SBV$-case in a second step.
\begin{lemma}\label{Lem3}
There exists a constant $C(m)$ depending upon $m$ only such that
for all $v \in BV(\R^m)$
\begin{equation*}
\lVert v \rVert_{L^{m/(m-1)}(\R^m)}\le C(m)|Dv|(\R^m),
\end{equation*}
where $|Dv|(\R^m)$ is the total variation of $v$ on $\R^m$, and
$C(m)$ is the isoperimetric constant given by
\begin{equation}\label{e21}
C(m)=m^{-1}\pi^{-1/2}(\Gamma((2+m)/2))^{1/m}.
\end{equation}
\end{lemma}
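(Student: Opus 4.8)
The plan is to establish the sharp Sobolev (isoperimetric) inequality $\lVert v \rVert_{L^{m/(m-1)}(\R^m)}\le C(m)|Dv|(\R^m)$ for $v\in BV(\R^m)$ by first proving it for smooth, compactly supported functions via the coarea formula and the classical isoperimetric inequality, and then extending to all of $BV(\R^m)$ by a standard approximation argument. The identification of $C(m)$ with the constant in \eqref{e21} comes from tracking the extremizers (balls) through the coarea argument.

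First I would recall the \emph{coarea formula} for the total variation: for $v\in C_c^\infty(\R^m)$ with $v\ge 0$ (the general case follows by splitting into positive and negative parts, or by noting $|v|$ has the same $L^{m/(m-1)}$ norm and no larger total variation),
\[
|Dv|(\R^m)=\int_0^\infty \H^{m-1}(\partial\{v>t\})\,dt = \int_0^\infty \per(\{v>t\})\,dt,
\]
where the level sets $\{v>t\}$ are, for a.e. $t$, smooth bounded open sets by Sard's theorem. Next I would apply the classical isoperimetric inequality in the form $\per(E)\ge m\,\omega_m^{1/m}|E|^{(m-1)/m}$, where $\omega_m$ is the volume of the unit ball; writing $\omega_m=\pi^{m/2}/\Gamma((m+2)/2)$ one checks $m\,\omega_m^{1/m}=1/C(m)$ with $C(m)$ as in \eqref{e21}. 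Thus $|Dv|(\R^m)\ge C(m)^{-1}\int_0^\infty |\{v>t\}|^{(m-1)/m}\,dt$.

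It remains to bound $\int_0^\infty |\{v>t\}|^{(m-1)/m}\,dt$ from below by $\lVert v\rVert_{L^{m/(m-1)}(\R^m)}$. Setting $\mu(t)=|\{v>t\}|$ and $q=m/(m-1)$, one has $\lVert v\rVert_{L^q}^q = q\int_0^\infty t^{q-1}\mu(t)\,dt$, so the desired inequality is
\[
\Bigl(q\int_0^\infty t^{q-1}\mu(t)\,dt\Bigr)^{1/q}\le \int_0^\infty \mu(t)^{1/q}\,dt,
\]
which is exactly the one-dimensional Hardy--Littlewood (or layer-cake/Minkowski) inequality: equivalently, with $f(t)=\mu(t)^{1/q}$ nonincreasing, $\lVert v\rVert_{L^q}=\lVert f\rVert_{L^q((0,\infty),\,q t^{q-1}dt)}$ and one uses that $f$ is a decreasing function together with the integral Minkowski inequality to dominate it by $\lVert f\rVert_{L^1}$. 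Combining the three steps yields $\lVert v\rVert_{L^{m/(m-1)}(\R^m)}\le C(m)|Dv|(\R^m)$ for smooth $v$.

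Finally, for general $v\in BV(\R^m)$ I would invoke the standard density of smooth functions in $BV$: there exist $v_k\in C_c^\infty(\R^m)$ with $v_k\to v$ in $L^1_{\mathrm{loc}}$ and $|Dv_k|(\R^m)\to |Dv|(\R^m)$. Passing to a subsequence converging a.e., Fatou's lemma gives $\lVert v\rVert_{L^{m/(m-1)}}\le \liminf_k \lVert v_k\rVert_{L^{m/(m-1)}}\le C(m)\liminf_k|Dv_k|(\R^m)=C(m)|Dv|(\R^m)$. The main obstacle is really bookkeeping rather than conceptual: one must verify that the constant produced by the isoperimetric inequality combined with the Minkowski-type step is \emph{exactly} $C(m)$ in \eqref{e21} and not merely comparable — this requires care that equality holds simultaneously (for indicator functions of balls) in both the isoperimetric and the layer-cake steps, which is what pins down the sharp value. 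The rest is routine measure-theoretic approximation.
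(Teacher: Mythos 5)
Your argument is correct: the coarea formula plus the sharp isoperimetric inequality (with $m\omega_m^{1/m}=1/C(m)$, $\omega_m=\pi^{m/2}/\Gamma((m+2)/2)$), followed by the layer-cake/Minkowski step and a mollification--cutoff approximation with Fatou, yields exactly the stated bound. The paper itself does not prove this lemma but cites Theorem 3.4.7 of Ambrosio--Fusco--Pallara, whose proof is precisely this standard Federer--Fleming/Maz'ya argument, so your route coincides with the cited one; note only that the remarks about simultaneous equality for balls concern sharpness and are not needed for the inequality itself.
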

For a proof we refer to Theorem 3.4.7 in \cite{AFP}, or for an
elementary proof with (non-sharp) constant $1$ instead of $C(m)$
to Theorem 1 in Section 4.5.1 in \cite{EG}.
\begin{lemma}\label{Lem4}
Let $\Omega$ be an open bounded set in $\R^m$ and with smooth
boundary, and let $u\in H^1( \Omega)$. Then
\begin{equation*}
\lVert u \rVert_{L^{2m/(m-1)}(\Omega)}^2\le
C(m)\left(2\int_{\Omega}|u||\nabla
u|+\int_{\partial\Omega}u^2d\H^{m-1}\right).
\end{equation*}
\end{lemma}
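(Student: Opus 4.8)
The plan is to combine the BV-isoperimetric inequality of Lemma~\ref{Lem3} with an extension/truncation argument applied to $u^2$. First I would consider the function $v=u^2$ restricted to $\overline{\Omega}$; since $\Omega$ is bounded with smooth boundary and $u\in H^1(\Omega)$, the function $u$ has a well-defined trace on $\partial\Omega$ and $v\in W^{1,1}(\Omega)$ with $\nabla v=2u\nabla u$ a.e.\ in $\Omega$. The natural object to feed into Lemma~\ref{Lem3} is the extension of $v$ by zero to all of $\R^m$, call it $\tilde v$. Because $\partial\Omega$ is smooth, $\tilde v\in BV(\R^m)$ and its total variation decomposes as $|D\tilde v|(\R^m)=\int_\Omega|\nabla v|\,dx+\int_{\partial\Omega}|v|\,d\H^{m-1}$, the second term being the contribution of the jump of $v$ across $\partial\Omega$ (from the interior trace value $v=u^2$ down to $0$ outside). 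This is the standard formula for the total variation of a Sobolev function extended by zero across a Lipschitz (here smooth) boundary.

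Next I would simply assemble the pieces. Applying Lemma~\ref{Lem3} to $\tilde v$ gives
\begin{equation*}
\lVert u^2\rVert_{L^{m/(m-1)}(\Omega)}=\lVert \tilde v\rVert_{L^{m/(m-1)}(\R^m)}\le C(m)|D\tilde v|(\R^m)=C(m)\left(\int_\Omega|\nabla(u^2)|\,dx+\int_{\partial\Omega}u^2\,d\H^{m-1}\right).
\end{equation*}
Since $|\nabla(u^2)|=2|u||\nabla u|$ a.e.\ and $\lVert u^2\rVert_{L^{m/(m-1)}(\Omega)}=\lVert u\rVert_{L^{2m/(m-1)}(\Omega)}^2$, this is exactly the claimed inequality. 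One should check that $\lVert u\rVert_{L^{2m/(m-1)}(\Omega)}$ is finite to begin with — but this follows from the Sobolev embedding $H^1(\Omega)\hookrightarrow L^{2m/(m-2)}(\Omega)$ (for $m\ge 3$; for $m=2$ one embeds into every $L^q$) on the bounded smooth domain $\Omega$, so all integrals in sight are finite and the manipulations are legitimate.

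The main obstacle, and the only place requiring real care, is justifying the total-variation formula $|D\tilde v|(\R^m)=\int_\Omega|\nabla v|+\int_{\partial\Omega}|v|\,d\H^{m-1}$ for the zero-extension of $v=u^2\in W^{1,1}(\Omega)$. The clean way is to first prove it for $u$ smooth up to the boundary, where $\tilde v$ is piecewise $C^1$ and the jump part of $D\tilde v$ is visibly $(v\,n)\,\H^{m-1}\rest\partial\Omega$ with $n$ the outward normal, so $|D\tilde v|$ picks up $\int_{\partial\Omega}|v|\,d\H^{m-1}$; then pass to general $u\in H^1(\Omega)$ by density of $C^\infty(\overline{\Omega})$ in $H^1(\Omega)$ (valid since $\partial\Omega$ is smooth), using continuity of the trace operator $H^1(\Omega)\to L^2(\partial\Omega)$ to handle the boundary term and lower semicontinuity of total variation together with $L^1$-convergence of $u_k^2\to u^2$ and $\nabla(u_k^2)\to\nabla(u^2)$ to handle the interior term. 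Equivalently one may invoke a standard reference such as Theorem~3.84 in \cite{AFP} on one-sided traces of $BV$ functions. I would present the smooth case in detail — which is all that is needed at this stage of the argument, since the surrounding text already announces that the general $SBV$ case is obtained afterwards by a density argument — and merely indicate the limiting procedure.
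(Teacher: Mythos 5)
Your proposal is correct and follows essentially the same route as the paper: extend $u^2$ by zero to $\R^m$, identify its distributional derivative as the absolutely continuous part $2u\nabla u$ on $\Omega$ plus the jump (trace) part $u^2\,\vec{n}\,\H^{m-1}$ on $\partial\Omega$, and then apply the $BV$ isoperimetric inequality of Lemma~\ref{Lem3}. The only difference is cosmetic: you spell out the smooth-approximation justification of the total-variation formula, which the paper takes for granted by simply asserting $u^2\in SBV(\R^m)$ with that decomposition.
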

\begin{proof}
Let $u\in H^1( \Omega)$ and observe that $u^2\in BV(\R^m)$. In
fact we have that $u^2\in SBV(\R^m)$, where $u$ is extended by $0$
on $\R^m\setminus \Omega$. Indeed $u^2 \in L^1(\R^m)$ and for any
open set $A\subset \R^m$,
\begin{equation*}
Du^2(A)=2\int_Au\nabla u+ \int_{\partial \Omega\cap A}
u^2\overrightarrow{n}d\H^{m-1}.
\end{equation*}
So
\begin{equation*}
|Du^2|(\R^m)\le 2\int_{\Omega}|u||\nabla u|+ \int_{\partial
\Omega} u^2d\H^{m-1}.
\end{equation*}
By Lemma \ref{Lem3}
\begin{equation*}
\lVert u^2 \rVert_{L^{m/(m-1)}(\Omega)}\le
C(m)\left(2\int_{\Omega}|u||\nabla u|+ \int_{\partial \Omega}
u^2d\H^{m-1}\right),
\end{equation*}
which implies the lemma.
\end{proof}
\begin{lemma}\label{Lem5}
For all $b>0$ and all $u \in H^1(\Omega)$,
\begin{equation}\label{e26}
\lVert u \rVert_{L^{2m/(m-1)}(\Omega)}^2\le
C(m)\left(\frac{1}{b}+\frac{b}{\lambda(\Omega,b)}\right)q_b(u),
\end{equation}
and for all $b\ge\lambda(\Omega,b)^{1/2}$ and $u \in H^1(\Omega),$
\begin{equation}\label{e27}
\lVert u
\rVert_{L^{2m/(m-1)}(\Omega)}^2\le2C(m)\lambda(\Omega,b)^{-1/2}q_b(u).
\end{equation}
\end{lemma}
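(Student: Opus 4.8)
The plan is to start from the conclusion of Lemma \ref{Lem4} and control the two terms on the right-hand side—the bulk term $2\int_\Omega |u|\,|\nabla u|$ and the boundary term $\int_{\partial\Omega} u^2\,d\H^{m-1}$—by the Robin quadratic form $q_b(u) = \int_\Omega |\nabla u|^2 + b\int_{\partial\Omega} u^2\,d\H^{m-1}$. The boundary term is immediate: it equals $b^{-1}$ times the boundary part of $q_b(u)$, hence is bounded by $b^{-1}q_b(u)$. For the bulk term, the idea is to apply the Cauchy–Schwarz inequality in the form $2\int_\Omega |u|\,|\nabla u| \le \delta^{-1}\int_\Omega |\nabla u|^2 + \delta \int_\Omega u^2$ for a parameter $\delta>0$ to be chosen. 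The gradient piece is then at most $\delta^{-1}q_b(u)$, and the remaining $\delta\int_\Omega u^2 = \delta\|u\|_{L^2(\Omega)}^2$ must be absorbed using the variational characterization \eqref{e8}, which gives $\|u\|_{L^2(\Omega)}^2 \le \lambda(\Omega,b)^{-1} q_b(u)$ (first for $u$ smooth in $\overline\Omega$, then for all $u\in H^1(\Omega)$ by density, since $\Omega$ is bounded with smooth boundary). Thus $2\int_\Omega |u|\,|\nabla u| \le (\delta^{-1} + \delta\lambda(\Omega,b)^{-1})\,q_b(u)$.

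Combining these estimates and inserting into Lemma \ref{Lem4} yields
\[
\|u\|_{L^{2m/(m-1)}(\Omega)}^2 \le C(m)\left(\frac{1}{\delta} + \frac{\delta}{\lambda(\Omega,b)} + \frac{1}{b}\right) q_b(u).
\]
Choosing $\delta = b$ gives exactly \eqref{e26}. For \eqref{e27}, under the extra hypothesis $b \ge \lambda(\Omega,b)^{1/2}$, I would instead optimize in $\delta$: the minimum of $\delta^{-1} + \delta\lambda(\Omega,b)^{-1}$ over $\delta>0$ is attained at $\delta = \lambda(\Omega,b)^{1/2}$, where it equals $2\lambda(\Omega,b)^{-1/2}$. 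Since $b \ge \lambda(\Omega,b)^{1/2}$ we also have $b^{-1} \le \lambda(\Omega,b)^{-1/2}$, so the bracketed sum is at most $2\lambda(\Omega,b)^{-1/2} + \lambda(\Omega,b)^{-1/2} = 3\lambda(\Omega,b)^{-1/2}$. To recover the sharper constant $2$ stated in \eqref{e27}, one should be slightly more careful: apply Cauchy–Schwarz in the form $2\int_\Omega|u||\nabla u| \le \delta^{-1}\int_\Omega|\nabla u|^2 + \delta\|u\|_{L^2}^2$ and note that the boundary term $b^{-1}\int_{\partial\Omega}u^2\,d\H^{m-1}$ together with the absorbed term $\delta\lambda(\Omega,b)^{-1}$ times the boundary-contribution can be combined, or more simply, bound $2\int_\Omega|u||\nabla u|+\int_{\partial\Omega}u^2\,d\H^{m-1}$ directly: with $\delta=\lambda(\Omega,b)^{1/2}$ one gets $\delta^{-1}\int_\Omega|\nabla u|^2 + \delta\lambda(\Omega,b)^{-1}q_b(u) + b^{-1}\cdot b\int_{\partial\Omega}u^2 \le \lambda(\Omega,b)^{-1/2}\int_\Omega|\nabla u|^2 + \lambda(\Omega,b)^{-1/2}q_b(u) + \lambda(\Omega,b)^{-1/2}b\int_{\partial\Omega}u^2\,d\H^{m-1} \le 2\lambda(\Omega,b)^{-1/2}q_b(u)$, where the last step uses $\lambda(\Omega,b)^{-1/2}\le\lambda(\Omega,b)^{-1/2}$ on the gradient term and reorganizes.

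The only genuine subtlety—and the step I would flag as the main point requiring care—is the passage from the variational infimum \eqref{e8}, which is taken over functions smooth in $\overline\Omega$, to the inequality $\|u\|_{L^2(\Omega)}^2 \le \lambda(\Omega,b)^{-1}q_b(u)$ for general $u\in H^1(\Omega)$; this is exactly the density of smooth functions in the form domain $W^1_{2,2}(\Omega,\partial\Omega)$, valid here since we are working with bounded $\Omega$ with smooth boundary, and the form $q_b$ is closed. Everything else is elementary: one application of Lemma \ref{Lem4}, one application of Cauchy–Schwarz with a parameter, and a choice or optimization of that parameter. I would write the two estimates as two short displayed computations, being mindful to track the constants so that \eqref{e27} comes out with the advertised factor $2C(m)$ rather than $3C(m)$.
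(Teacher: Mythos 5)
Your treatment of \eqref{e27} is correct and is essentially the paper's argument: Cauchy--Schwarz with parameter $\lambda(\Omega,b)^{1/2}$, the spectral bound $\int_\Omega u^2\le\lambda(\Omega,b)^{-1}q_b(u)$, and the hypothesis $b\ge\lambda(\Omega,b)^{1/2}$ to absorb the boundary term (the paper phrases this last step as replacing $b$ by $\lambda(\Omega,b)^{1/2}$ and invoking monotonicity of $b\mapsto q_b$). The density point you flag is consistent with the paper's setting, since at this stage $\Omega$ is bounded with smooth boundary, so the infimum in \eqref{e8} controls all of $H^1(\Omega)$.

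However, your derivation of \eqref{e26} has a genuine, if small, gap: as written it does not yield the stated constant. You bound the gradient piece by $\delta^{-1}\int_\Omega|\nabla u|^2\le\delta^{-1}q_b(u)$ and, separately, the boundary term by $\int_{\partial\Omega}u^2\,d\H^{m-1}\le b^{-1}q_b(u)$; each of these estimates throws in the other half of $q_b(u)$, so you double count and obtain
\[
\lVert u\rVert_{L^{2m/(m-1)}(\Omega)}^2\le C(m)\Bigl(\frac{1}{\delta}+\frac{\delta}{\lambda(\Omega,b)}+\frac{1}{b}\Bigr)q_b(u),
\]
which at $\delta=b$ gives $C(m)\bigl(\tfrac{2}{b}+\tfrac{b}{\lambda(\Omega,b)}\bigr)q_b(u)$, strictly weaker than \eqref{e26}; the claim that this choice ``gives exactly \eqref{e26}'' is therefore false. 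The repair is precisely the grouping you already use in your \eqref{e27} computation, and it is the paper's route: take $\delta=b$ and keep the exact identity $b^{-1}\int_\Omega|\nabla u|^2+\int_{\partial\Omega}u^2\,d\H^{m-1}=b^{-1}q_b(u)$, so that Lemma \ref{Lem4} gives $\lVert u\rVert_{L^{2m/(m-1)}(\Omega)}^2\le C(m)\bigl(b^{-1}q_b(u)+b\int_\Omega u^2\bigr)\le C(m)\bigl(b^{-1}+b\,\lambda(\Omega,b)^{-1}\bigr)q_b(u)$, which is \eqref{e26}. With this one-line correction your proof coincides with the paper's.
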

\begin{proof}
In order to prove \eqref{e27} we use Cauchy-Schwarz and obtain
that
\begin{equation}\label{e28}
2\int_{\Omega}|u||\nabla u|\le b^{-1}\int_{\Omega}|\nabla
u|^2+b\int_{\Omega}u^2.
\end{equation}
So by Lemma \ref{Lem4}
\begin{align}\label{e29}
\lVert u \rVert_{L^{2m/(m-1)}(\Omega)}^2&\le
C(m)\left(b^{-1}\int_{\Omega}|\nabla
u|^2+b\int_{\Omega}u^2+\int_{\partial\Omega}u^2d\H^{m-1}\right)\nonumber \\
&\le C(m)b^{-1}\left(q_b(u)+b^2\int_{\Omega}u^2\right)\nonumber \\
& \le
C(m)b^{-1}\left(q_b(u)+b^2\lambda(\Omega,b)^{-1}q_b(u)\right),
\end{align}
which yields \eqref{e26}. In order to prove \eqref{e27} we replace
$b$ by $\lambda(\Omega,b)^{1/2}$ in \eqref{e28} and \eqref{e29}
respectively. This gives that
\begin{equation*}
\lVert u
\rVert_{L^{2m/(m-1)}(\Omega)}^2\le2C(m)\lambda(\Omega,b)^{-1/2}q_{\lambda(\Omega,b)^{1/2}}(u)\le
2C(m)\lambda(\Omega,b)^{-1/2}q_b(u),
\end{equation*}
by monotonicity of $b\mapsto q_b$.
\end{proof}
Finally we obtain the following Nash inequality.
\begin{lemma}\label{Lem6}
For all $u\in H^1(\Omega)$ we have that
\begin{equation*}
\lVert u \rVert_{L^2(\Omega)}^{2+\frac{2}{m}}\le \mathcal{N}_bq_b(u)\lVert u
\rVert_{L^1(\Omega)}^{\frac{2}{m}},
\end{equation*}
where
\begin{equation}\label{e32}
\mathcal{N}_b=\begin{cases}C(m)(b^{-1}+\lambda(\Omega,b)^{-1}b)&,\quad
b>0,\\
\hfill 2C(m)\lambda(\Omega,b)^{-1/2}&, \quad b\ge\lambda(\Omega,b)^{1/2}.\\
\end{cases}
\end{equation}
\end{lemma}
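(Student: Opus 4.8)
The plan is to obtain the Nash inequality by interpolating the $L^2$ norm between $L^1$ and $L^{2m/(m-1)}$ and then invoking Lemma \ref{Lem5}. First I would record the elementary Hölder interpolation inequality: since $1<2<\frac{2m}{m-1}$ for every $m\ge 2$, there is a $\theta\in(0,1)$ with
\[
\frac12=\frac{\theta}{1}+(1-\theta)\cdot\frac{m-1}{2m},
\]
and solving gives $\theta=\frac{1}{m+1}$. Consequently
\[
\lVert u\rVert_{L^2(\Omega)}\le \lVert u\rVert_{L^1(\Omega)}^{1/(m+1)}\,\lVert u\rVert_{L^{2m/(m-1)}(\Omega)}^{m/(m+1)}.
\]

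Next I would rearrange this: raising to the power $\frac{2(m+1)}{m}=2+\frac{2}{m}$ yields
\[
\lVert u\rVert_{L^2(\Omega)}^{2+2/m}\le \lVert u\rVert_{L^1(\Omega)}^{2/m}\,\lVert u\rVert_{L^{2m/(m-1)}(\Omega)}^{2}.
\]
It then remains only to substitute the bound for $\lVert u\rVert_{L^{2m/(m-1)}(\Omega)}^{2}$ coming from Lemma \ref{Lem5}: the estimate \eqref{e26} gives the first line of \eqref{e32}, with $\mathcal{N}_b=C(m)(b^{-1}+b\lambda(\Omega,b)^{-1})$ valid for all $b>0$, while \eqref{e27} gives the second line, with $\mathcal{N}_b=2C(m)\lambda(\Omega,b)^{-1/2}$ valid for $b\ge\lambda(\Omega,b)^{1/2}$. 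As announced in the text before Lemma \ref{Lem3}, this is first carried out for open bounded $\Omega$ with smooth boundary, which is the setting in which Lemmas \ref{Lem4} and \ref{Lem5} were established; the general case follows afterwards by the density argument mentioned there.

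I do not expect a serious obstacle here: the only point requiring care is the bookkeeping of exponents, namely checking that the interpolation parameter $\theta=\frac{1}{m+1}$ is exactly the value that turns the power of $\lVert u\rVert_{L^2(\Omega)}$ into $2+\frac{2}{m}$, which is what makes both sides homogeneous of the correct degree under scaling $u\mapsto cu$. All of the genuine analytic content — the $BV$ isoperimetric/Sobolev inequality and the passage to the boundary integral via $u^2\in SBV(\R^m)$ — has already been absorbed into Lemmas \ref{Lem3}–\ref{Lem5}, so the proof of Lemma \ref{Lem6} is essentially just an application of Hölder's inequality.
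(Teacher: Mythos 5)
Your proposal is correct and coincides with the paper's own proof: the authors also obtain Lemma \ref{Lem6} by combining Lemma \ref{Lem5} with the interpolation inequality $\lVert u \rVert_{L^2(\Omega)}\le \lVert u\rVert_{L^{2m/(m-1)}(\Omega)}^{m/(m+1)}\lVert u\rVert_{L^1(\Omega)}^{1/(m+1)}$, and your exponent bookkeeping ($\theta=1/(m+1)$, power $2+2/m$) is accurate.
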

\begin{proof}
The lemma follows by Lemma \ref{Lem5} and the following interpolation
inequality
\begin{equation*}
\lVert u \rVert_{L^2(\Omega)}\le \lVert u
\rVert_{L^{2m/(m-1)}(\Omega)}^{m/(m+1)}\lVert u
\rVert_{L^1(\Omega)}^{1/(m+1)}.
\end{equation*}
\end{proof}

For non-smooth $\Omega$ one can follow the same arguments if
instead of the Sobolev traces we consider $u^2\in SBV(\R^m)$, and
pointwise traces, or the Mazya trace (which has a priori higher
$L^2$ norm). See page 940 lines -10  to -1 and page 941 lines
1 to 6 in \cite{BG}.

In the lemma below we will use the heat equation techniques from \cite{MvdB2} that were used to obtain bounds
for the torsion function with Dirichlet boundary conditions. We abbreviate $K=C_{2m}\mathcal{N}_b^m$.
\begin{lemma}\label{Lem7}
Suppose that $\lambda(\Omega,b)>0$ and that \eqref{e19} holds. Let
$T>0$ be arbitrary. Then the torsion function with Robin boundary
conditions satisfies
\begin{equation*}
\lVert u \rVert_{L^{\infty}(\Omega)}\le T+
2^{-1}(256\pi\omega)^{m/2}K\lambda(\Omega,b)^{-1}T^{-m/2}e^{-T\lambda(\Omega,b)/4}.
\end{equation*}
\end{lemma}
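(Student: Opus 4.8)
The plan is to represent the torsion function through the Robin heat kernel and then upgrade the Gaussian bound \eqref{e19} to one that also decays exponentially in time. Put $\phi(x,t)=\int_\Om p_b(x,y;t)\,dy$. Since the Robin quadratic form is Markovian (unit contractions do not increase $\mathcal{Q}_b$), the associated semigroup is sub-Markovian, so $0\le\phi(x,t)\le1$; and, because $-\Delta_b u_b=1$ with $\lambda(\Om,b)>0$, one has (by the construction of $u_b$, cf.\ the heat-equation techniques of \cite{MvdB2}) $u_b(x)=\int_0^\infty\phi(x,t)\,dt$. Splitting the integral at $T$ and bounding $\phi\le1$ on $(0,T)$,
\[
u_b(x)\le T+\int_T^\infty\phi(x,s)\,ds ,
\]
so everything reduces to estimating $\int_T^\infty\phi(x,s)\,ds$, i.e.\ to pointwise decay of $\phi(x,s)$ as $s\to\infty$.

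The core step is to show that \eqref{e19} together with $\lambda(\Om,b)>0$ gives, for a dimensional constant $c_1(m)$ and all $t>0$, $x,y\in\Om$,
\[
p_b(x,y;t)\le c_1(m)\,K\,t^{-m}\,e^{-\lambda(\Om,b)t/4}\,e^{-|x-y|^2/(64\omega t)} .
\]
I would obtain this by Davies' exponential–perturbation method, organised so that it remains uniform in $\Om$; in particular it must avoid $\|1\|_{L^2(\Om)}$, which is infinite when $|\Om|=\infty$. Fix $x,y$, put $\psi(z)=\langle z,e\rangle$ with $e=(x-y)/|x-y|$ (so $|\nabla\psi|=1$ and $\psi(x)-\psi(y)=|x-y|$), let $\alpha\ge0$, and work with the tilted kernel $p_b^{(\alpha)}(z,w;t)=e^{\alpha\psi(z)}p_b(z,w;t)e^{-\alpha\psi(w)}$, the kernel of the semigroup $E_\alpha(t):=e^{\alpha\psi}e^{t\Delta_b}e^{-\alpha\psi}$ ($e^{t\Delta_b}$ being the Robin semigroup with kernel $p_b$). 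Writing $t=t_1+t_2+t_3$ with $t_2=t/4$ and $t_1=t_3=3t/8$, Chapman--Kolmogorov and Cauchy--Schwarz give
\[
p_b^{(\alpha)}(x,y;t)\le\bigl\|p_b^{(\alpha)}(x,\cdot\,;t_1)\bigr\|_{L^2(\Om)}\,\|E_\alpha(t_2)\|_{L^2(\Om)\to L^2(\Om)}\,\bigl\|p_b^{(\alpha)}(\cdot\,,y;t_3)\bigr\|_{L^2(\Om)} .
\]
For the two outer factors, estimating one of the two copies of $p_b$ by \eqref{e19}, using $\psi(x)-\psi(z)\le|x-z|$ together with $-r^2/(8\omega\tau)+2\alpha r\le 8\alpha^2\omega\tau$, and then $\int_\Om p_b(x,z;\tau)\,dz\le1$, yields $\|p_b^{(\alpha)}(x,\cdot\,;\tau)\|_{L^2(\Om)}^2\le K\tau^{-m}e^{8\alpha^2\omega\tau}$ (and likewise for the $t_3$–factor) — crucially independent of $|\Om|$. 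For the middle factor, the identity $\mathcal{Q}_b(e^{-\alpha\psi}f,e^{\alpha\psi}f)=q_b(f)-\alpha^2\!\int_\Om f^2|\nabla\psi|^2=q_b(f)-\alpha^2\|f\|_{L^2(\Om)}^2\ge(\lambda(\Om,b)-\alpha^2)\|f\|_{L^2(\Om)}^2$ (the quadratic form of the generator of $E_\alpha$) shows $\|E_\alpha(t_2)\|_{L^2(\Om)\to L^2(\Om)}\le e^{-(\lambda(\Om,b)-\alpha^2)t_2}$. Multiplying the three bounds, recalling $p_b^{(\alpha)}(x,y;t)=e^{\alpha|x-y|}p_b(x,y;t)$, and minimising the resulting exponent $c\,\omega\alpha^2t-\alpha|x-y|$ over $\alpha\ge0$ gives the displayed Gaussian bound.

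Granting this, integration in $y$ over $\R^m$ gives $\phi(x,t)\le c_1(m)(64\omega\pi)^{m/2}K\,t^{-m/2}e^{-\lambda(\Om,b)t/4}$, whence
\[
\int_T^\infty\phi(x,s)\,ds\le c_1(m)(64\omega\pi)^{m/2}K\,T^{-m/2}\!\int_T^\infty e^{-\lambda(\Om,b)s/4}\,ds=\frac{4}{\lambda(\Om,b)}\,c_1(m)(64\omega\pi)^{m/2}K\,T^{-m/2}e^{-\lambda(\Om,b)T/4}.
\]
Bookkeeping the dimensional constants through the perturbation argument so that $4c_1(m)(64\omega\pi)^{m/2}=2^{-1}(256\pi\omega)^{m/2}$ (this is where the precise powers of $2$, $\omega$ and $C_{2m}=(192m)^m$ must be tracked), and adding $\int_0^T\phi\le T$, produces the asserted inequality.

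The main obstacle is this Davies step together with its bookkeeping. The conceptual point is that one must \emph{not} estimate $\phi(\cdot,s)=e^{s\Delta_b}1$ directly in $L^2(\Om)$ — which would fail for open sets of infinite measure — but instead apply the $L^2$–contraction of rate $\lambda(\Om,b)$ only to the \emph{middle} factor $E_\alpha(t_2)$, the two outer tilted–kernel $L^2$–norms being controlled purely by \eqref{e19} and independent of $|\Om|$. After that, making all constants explicit is elementary but lengthy.
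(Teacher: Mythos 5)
Your overall skeleton is the same as the paper's (represent $u_b(x)=\int_0^\infty\int_\Omega p_b(x,y;t)\,dy\,dt$, bound the integrand by $1$ on $[0,T]$ via sub-Markovianity, and get exponential-in-time Gaussian decay for $t\ge T$), but your key step is genuinely different: you re-derive the damped Gaussian bound by a Davies exponential-perturbation argument with a three-fold time splitting, applying the $L^2\to L^2$ decay $e^{-(\lambda(\Omega,b)-\alpha^2)t_2}$ only to the middle factor so that $|\Omega|=\infty$ causes no trouble — that observation is correct and the form identity $\mathcal{Q}_b(e^{-\alpha\psi}f,e^{\alpha\psi}f)=q_b(f)-\alpha^2\|f\|_2^2$ is right (modulo the standard truncation of the unbounded $\psi$). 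The paper instead gets the same type of bound in two lines: $p_b(x,y;t)\le p_b(x,y;t)^{1/2}\bigl(p_b(x,x;t)p_b(y,y;t)\bigr)^{1/4}$ together with the spectral-decay estimate $p_b(x,x;t)\le e^{-t\lambda(\Omega,b)/2}p_b(x,x;t/2)$ (Lemma 1 of \cite{MvdB2}, valid for Robin kernels) and \eqref{e19}, which yields $p_b(x,y;t)\le K2^{m/2}e^{-t\lambda/4}t^{-m}e^{-|x-y|^2/(16\omega t)}$ and, after integrating in $y$ and $t$, exactly the stated constant since $4(32\pi\omega)^{m/2}\le 2^{-1}(256\pi\omega)^{m/2}$ for $m\ge2$.

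The genuine gap in your version is the constant bookkeeping, which is not a formality here because the stated constant is used verbatim in \eqref{e43}--\eqref{e46}. With your split $t_1=t_3=3t/8$, $t_2=t/4$, the three factors give $p_b(x,y;t)\le K(8/3)^m t^{-m}e^{-\lambda t/4}\exp\bigl((3\omega+\tfrac14)\alpha^2t-\alpha|x-y|\bigr)$, hence after optimising in $\alpha$ a Gaussian rate $|x-y|^2/\bigl((12\omega+1)t\bigr)$ and, after integrating, a constant $4(8/3)^m\bigl((12\omega+1)\pi\bigr)^{m/2}$. Your claimed identity $4c_1(m)(64\omega\pi)^{m/2}=2^{-1}(256\pi\omega)^{m/2}$ would force $8c_1(m)=2^m$, whereas your prefactor is $c_1(m)=(8/3)^m$, so the claim fails by the factor $8(4/3)^m$; numerically one needs $8(8/3)^m\bigl((12\omega+1)/(256\omega)\bigr)^{m/2}\le1$, which is false for $m=2,3$ (and re-tuning the splitting proportions $t_1=t_3=(1-\theta)t/2$, $t_2=\theta t\ge t/4$ cannot fix it, since the requirement $4\omega(1-\theta)+\theta\le 2\omega(1-\theta)^2$ for $m=2$ has no solution with $\theta\in[1/4,1)$, given the crude outer-factor bound $\|p_b^{(\alpha)}(x,\cdot;\tau)\|_2^2\le K\tau^{-m}e^{8\alpha^2\omega\tau}$). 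So your argument proves the lemma with a larger dimensional constant, but not with the constant $2^{-1}(256\pi\omega)^{m/2}$ as stated; to recover the exact statement, replace the Davies step by the paper's interpolation trick, or sharpen the outer-factor estimates.
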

\begin{proof}
First note that Lemma 1 in \cite{MvdB2} holds for heat kernels
with Robin boundary conditions. By choosing $\beta=1/2$ in that
lemma we obtain that
\begin{equation*}
p_b(x,x;t) \leq e^{-t \lambda(\Omega,b)/2}\, p_b\big(x,x;t/2).
\end{equation*}
Next note that by the heat semigroup property and the Cauchy-Schwarz inequality
\begin{align*}
p_b(x,y;t) & =   \int_{\Omega} p_b(x,z;t/2)\, p_b(z,y;t/2)dz \nonumber \\  & \leq
\left( \int_{\Omega} p_b(x,z;t/2)^2dz\right)^{1/2} \left(\int_{\Omega}
p_b(z,y;t/2)^2\,dz\right)^{1/2}\nonumber \\  & = (p_b(x,x;t)\,
p_b(y,y;t))^{1/2}.
\end{align*}
So putting the above two estimates together with \eqref{e19} gives that
\begin{align*}
p_b(x,y;t)&\le
(p_b(x,y;t))^{1/2}(p_b(x,x;t)p_b(y,y;t))^{1/4}\nonumber \\ & \le
K2^{m/2}e^{-t\lambda(\Omega,b)/4}t^{-m}e^{-|x-y|^2/(16\omega t)}.
\end{align*}
We obtain that, by extending the region of integration to all of $\R^m$,
\begin{equation*}
\int_{\Omega}dyp_b(x,y;t)\le
K(32\pi\omega)^{m/2}t^{-m/2}e^{-t\lambda(\Omega,b)/4},
\end{equation*}
and
\begin{align}\label{e39}
\int_{[T,\infty)}dt\int_{\Omega}dyp_b(x,y;t)&\le
4(32\pi\omega)^{m/2}K\lambda(\Omega,b)^{-1}T^{-m/2}e^{-T\lambda(\Omega,b)/4}\nonumber
\\ & \le
2^{-1}(256\pi\omega)^{m/2}K\lambda(\Omega,b)^{-1}T^{-m/2}e^{-T\lambda(\Omega,b)/4}.
\end{align}
Furthermore
\begin{equation*}
v(x;t)=\int_{\Omega}dyp_b(x,y;t)
\end{equation*}
is the solution of $\Delta v=\frac{\partial v}{\partial t}$ with initial condition $v(x;0)=1$ and Robin boundary conditions.
By the maximum principle we have that $0\le v(x;t) \le 1$. Hence
\begin{equation*}
\int_{[0,T]}dt\int_{\Omega}dyp_b(x,y;t)\le T,
\end{equation*}
and the lemma follows by \eqref{e39} since the torsion function can be represented by
\begin{equation*}
u(x)=\int_{[0,\infty)}dt\int_{\Omega}dyp_b(x,y;t).
\end{equation*}
\end{proof}

\noindent \emph{Proof of Theorem \ref{The1}}. We choose $T$ to be
the unique positive root of
\begin{equation}\label{e43}
(256\pi\omega)^{m/2}K\lambda(\Omega,b)^{-1}T^{-m/2}e^{-T\lambda(\Omega,b)/4}=T.
\end{equation}
We rewrite this, using the numerical value of $K$, as follows.
\begin{equation}\label{e44}
(T\lambda(\Omega,b))^{(2+m)/2}e^{T\lambda(\Omega,b)/4}=(2^{20}3^2m^2\mathcal{N}_b^2\lambda(\Omega,b)\pi\omega)^{m/2}.
\end{equation}
It is easily seen that $\mathcal{N}_b^2\lambda(\Omega,b)\ge 1$ for
all $b$ and all $\lambda(\Omega,b)$, and that the right hand side
of \eqref{e44} is at least $e^{1/4}$. We conclude that
$T\lambda(\Omega,b)\ge 1$. Hence
$e^{T\lambda(\Omega,b)/4}\le(2^{20}3^2m^2\mathcal{N}_b^2\lambda(\Omega,b)\pi\omega)^{m/2}$,
and
\begin{equation}\label{e45}
T\le 2m\lambda(\Omega,b)^{-1}
\log(2^{20}3^2m^2\mathcal{N}_b^2\lambda(\Omega,b)\pi\omega).
\end{equation}
By Lemma \ref{Lem7}, \eqref{e43} and \eqref{e45} we find that
\begin{equation}\label{e46}
\lVert u \rVert_{L^{\infty}(\Omega)}\le
3m\lambda(\Omega,b)^{-1}\log(2^{20}3^2m^2\mathcal{N}_b^2\lambda(\Omega,b)\pi\omega).
\end{equation}
To estimate the numerical constant under the $\log$ in the right
hand side of \eqref{e46} we first note that by \eqref{e32},
\begin{equation}\label{e47}
\mathcal{N}_b^2\lambda(\Omega,b)\le
4C^2(m)\left(1+b^{-1}\lambda(\Omega,b)^{1/2}\right)^2.
\end{equation}
By \eqref{e21}, \eqref{e46}, \eqref{e47} and the bounds $\omega\le
6m$ and $(\Gamma((2+m)/2))^{2/m}\le m/2$ we find that
\begin{equation*}
\lVert u \rVert_{L^{\infty}(\Omega)}\le
3m\lambda(\Omega,b)^{-1}\log\left(2^{22}3^3m^2(1+b^{-1}\lambda(\Omega,b)^{1/2})^2\right).
\end{equation*}
This completes the proof of the right hand side in \eqref{e9}.

To prove the lower bound in \eqref{e9} we let {
$B(p,R)=\{x:|x-p|<R\}$, $\Omega_R=\Omega\cap B(p,R)$, and we
denote by $p_{b,R}(x,y;t)$ }the heat kernel with Robin boundary
conditions on $(\partial \Omega) \cap B(p,R)$ and Dirichlet
boundary conditions on $(\partial\Omega_R)\setminus ((\partial
\Omega) \cap B(p,R))$. The region $\Omega_R$ has finite volume and
the spectrum of the Laplacian with the corresponding mixed
boundary conditions is discrete. Denote the first eigenvalue by
$\tilde{\lambda}(\Omega_R,b)$ with corresponding eigenfunction
$\tilde{\phi}_{b,R}$. In Proposition \ref{Pro9} below we will see
that the first Robin eigenfunction on an open set with finite
Lebesgue measure is bounded. Following the proof of Proposition
\ref{Pro9} we will show that $\tilde{\phi}_{b,R}$ is also bounded.
We then have that
\begin{align}\label{e49}
u_{b,R}(x)&=\int_{[0,\infty)}dt\int_{\Omega}dyp_{b,R}(x,y;t)\nonumber \\
&\ge\int_{[0,\infty)}dt\int_{\Omega}dyp_{b,R}(x,y;t)\frac{\tilde{\phi}_{b,R}(y)}{\lVert\tilde{\phi}_{b,R}\rVert_{L^{\infty}(\Omega_R)}}\nonumber
\\ &
=\int_{[0,\infty)}dte^{-t\tilde{\lambda}(\Omega_R,b)}\frac{\tilde{\phi}_{b,R}(x)}{\lVert\tilde{\phi}_{b,R}\rVert_{L^{\infty}(\Omega_R)}}\nonumber
\\
&=\tilde{\lambda}(\Omega_R,b)^{-1}\frac{\tilde{\phi}_{b,R}(x)}{\lVert\tilde{\phi}_{b,R}\rVert_{L^{\infty}(\Omega_R)}}.
\end{align}
Taking first the supremum over all $x\in \Omega_R$ in the left
hand side of \eqref{e49}, and subsequently the supremum over all
$x\in \Omega_R$ in the right hand side of \eqref{e49} gives that
\begin{equation}\label{e50}
\lVert u_{b,R}\rVert_{L^{\infty}(\Omega_R)
}\ge\tilde{\lambda}(\Omega_R,b)^{-1}.
\end{equation}
Taking first the limit $R\rightarrow \infty$ followed by the same
limit in the right hand side of \eqref{e50} yields the lower bound
in \eqref{e9}. This completes the proof of Theorem \ref{The1}.

\section{Robin eigenfunctions \label{sec3}}
In this section we obtain some estimates for eigenfunctions of the
Robin Laplacian.
\begin{proposition}\label{Pro8}
Let $ \Omega$ be an open set in $\R^m$ with finite measure
$|\Omega|,$ and suppose that $\lambda(\Omega,b)>0$. Then the
spectrum of the Robin Laplacian acting in { $L^2(\Omega)$} is
discrete and for all $t>0$
\begin{equation}\label{e51}
\sum_{j=1}^{\infty}e^{-t\lambda_j(\Omega,b)}\le
C_{2m}\mathcal{N}^m_b|\Omega|t^{-m},
\end{equation}
where $C_{2m}$ and $\mathcal{N}_b$ are the constants in
\eqref{e17} and \eqref{e32} respectively and
$\{\lambda_j(\Omega,b):j\in \mathbb N\}$ are the eigenvalues of
the Robin Laplacian. Note that
$\lambda(\Omega,b)=\lambda_1(\Omega,b)$ in this case.
\end{proposition}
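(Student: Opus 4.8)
The plan is to read off both conclusions directly from the Gaussian bound \eqref{e19}, exactly as in the Dirichlet situation. Write $H_b$ for the self-adjoint operator generated by $\mathcal{Q}_b$, so that $p_b(x,y;t)$ is the integral kernel of $e^{-tH_b}$. As in Section \ref{sec2}, one first argues for an open bounded $\Omega$ with smooth boundary --- where existence, symmetry and interior continuity of $p_b$, the semigroup property, the spectral expansion, and the validity of \eqref{e19} through Lemma \ref{Lem6} and Theorem 6.1 of \cite{D2} are all classical --- and then passes to an arbitrary open $\Omega$ of finite measure by the $SBV$/pointwise-trace approximation indicated after Lemma \ref{Lem6}.

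First I would extract from \eqref{e19}, by setting $y=x$, the on-diagonal estimate $p_b(x,x;t)\le C_{2m}\mathcal{N}_b^m t^{-m}$ for all $x\in\Omega$ and $t>0$. Since $p_b\ge 0$ and $|\Omega|<\infty$, squaring \eqref{e19} and integrating yields, for each fixed $t>0$,
\begin{equation*}
\int_{\Omega}\int_{\Omega}p_b(x,y;t)^2\,dx\,dy\le\big(C_{2m}\mathcal{N}_b^m t^{-m}\big)^2\int_{\Omega}\int_{\Omega}e^{-|x-y|^2/(4\omega t)}\,dx\,dy\le\big(C_{2m}\mathcal{N}_b^m t^{-m}\big)^2(4\pi\omega t)^{m/2}|\Omega|<\infty,
\end{equation*}
so $e^{-tH_b}$ is a Hilbert--Schmidt operator, hence compact, for every $t>0$. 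Therefore $H_b$ has compact resolvent, its spectrum is discrete and consists of eigenvalues $\lambda_1(\Omega,b)\le\lambda_2(\Omega,b)\le\cdots\to+\infty$, and, since $H_b\ge 0$ with spectral bottom $\lambda(\Omega,b)$, we get $\lambda_1(\Omega,b)=\lambda(\Omega,b)$. This proves the first assertion.

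For \eqref{e51} I would factor $e^{-tH_b}=e^{-(t/2)H_b}e^{-(t/2)H_b}$; being a product of two Hilbert--Schmidt operators it is trace class, and expanding in an orthonormal basis of eigenfunctions and using the semigroup property together with the symmetry $p_b(x,y;t/2)=p_b(y,x;t/2)$ gives
\begin{equation*}
\sum_{j=1}^{\infty}e^{-t\lambda_j(\Omega,b)}=\mathrm{Tr}\,e^{-tH_b}=\int_{\Omega}\int_{\Omega}p_b(x,y;t/2)^2\,dx\,dy=\int_{\Omega}p_b(x,x;t)\,dx.
\end{equation*}
(Equivalently, one may apply Mercer's theorem to the symmetric, positive, continuous kernel $(x,y)\mapsto p_b(x,y;t)$ and integrate over the diagonal.) Substituting the on-diagonal bound yields
\begin{equation*}
\sum_{j=1}^{\infty}e^{-t\lambda_j(\Omega,b)}\le C_{2m}\mathcal{N}_b^m|\Omega|t^{-m},
\end{equation*}
which is \eqref{e51}.

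The hypothesis $\lambda(\Omega,b)>0$ is used only to make $\mathcal{N}_b$ in \eqref{e32} finite, so that \eqref{e19} is not vacuous and the above compactness is available; without it the semigroup need not be compact. The one point that requires care is the passage from smooth bounded domains to an arbitrary open set of finite measure while keeping $\mathcal{N}_b$ --- which is defined through $\lambda(\Omega,b)$ --- under control, but this is precisely what Lemmas \ref{Lem5}, \ref{Lem6} and the approximation remark following Lemma \ref{Lem6} already supply. Thus the proposition is essentially a corollary of \eqref{e19}, and I expect no serious obstacle beyond the bookkeeping already carried out in Section \ref{sec2}.
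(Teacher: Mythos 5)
Your proposal is correct and follows essentially the same route as the paper: the paper's own proof simply integrates the diagonal bound $p_b(x,x;t)\le C_{2m}\mathcal{N}_b^m t^{-m}$ coming from \eqref{e19} over the finite-measure set $\Omega$ to conclude that the Robin heat semigroup is trace class, hence has discrete spectrum, and that the trace bound is exactly \eqref{e51}. Your Hilbert--Schmidt factorization $e^{-tH_b}=e^{-(t/2)H_b}e^{-(t/2)H_b}$ and the identity $\mathrm{Tr}\,e^{-tH_b}=\int_{\Omega}p_b(x,x;t)\,dx$ are just the standard details the paper leaves implicit.
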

\begin{proof}
Integrating the diagonal element of the heat kernel over $\Omega$
 shows that the Robin heat semigroup is trace class.
Hence the Robin spectrum is discrete and this in turn implies
\eqref{e51}.
\end{proof}

It is well known that upper bounds on the heat kernel imply bounds for eigenfunctions. See
Example 2.1.8 in \cite{Davies}. The following below is another such instance.
\begin{proposition}\label{Pro9}
Let $ \Omega$ be an open set in $\R^m$ with finite measure
$|\Omega|,$ and suppose that $\lambda(\Omega,b)>0$. Let
$\{\phi_j:j\in \mathbb N\}$ denote an orthonormal set of
eigenfunctions corresponding to the eigenvalues in Proposition
\ref{Pro8}. Then $\phi_j\in L^1(\Omega)\cap L^{\infty}(\Omega)$
and for all $j\in \mathbb N$
\begin{equation}\label{e52}
\lVert
\phi_j\rVert_{L^{\infty}(\Omega)}\le(C_{2m}\mathcal{N}^m_be^mm^{-m})^{1/2}\lambda_j(\Omega,b)^{m/2}.
\end{equation}
\end{proposition}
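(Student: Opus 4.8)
The plan is to bound $\|\phi_j\|_{L^\infty(\Omega)}$ by using the spectral representation of the heat semigroup together with the pointwise Gaussian bound \eqref{e19} on the diagonal. First I would note that since $\phi_j$ is an eigenfunction with eigenvalue $\lambda_j(\Omega,b)$, the heat semigroup acts on it by $e^{t\Delta}\phi_j = e^{-t\lambda_j(\Omega,b)}\phi_j$, i.e.
\begin{equation*}
\phi_j(x) = e^{t\lambda_j(\Omega,b)}\int_\Omega p_b(x,y;t)\phi_j(y)\,dy,
\end{equation*}
for every $t>0$ and (a.e.) $x\in\Omega$.

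Next I would apply the Cauchy--Schwarz inequality in $y$, using that $\|\phi_j\|_{L^2(\Omega)}=1$:
\begin{equation*}
|\phi_j(x)| \le e^{t\lambda_j(\Omega,b)}\Big(\int_\Omega p_b(x,y;t)^2\,dy\Big)^{1/2} = e^{t\lambda_j(\Omega,b)}\, p_b(x,x;2t)^{1/2},
\end{equation*}
where the last equality is the semigroup identity $\int_\Omega p_b(x,y;t)^2\,dy = p_b(x,x;2t)$. (One could equivalently just estimate $\int_\Omega p_b(x,y;t)^2\,dy$ directly from \eqref{e19}; the constant comes out the same up to the choice of $t$.) Now I invoke the Gaussian diagonal bound: from \eqref{e19}, $p_b(x,x;s) \le C_{2m}\mathcal{N}_b^m s^{-m}$, so with $s=2t$ this gives $p_b(x,x;2t)^{1/2} \le (C_{2m}\mathcal{N}_b^m)^{1/2}(2t)^{-m/2}$, hence
\begin{equation*}
|\phi_j(x)| \le (C_{2m}\mathcal{N}_b^m)^{1/2}\,(2t)^{-m/2}\,e^{t\lambda_j(\Omega,b)}.
\end{equation*}
Since this holds for all $t>0$, I minimise the right-hand side over $t$. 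Writing $\mu=\lambda_j(\Omega,b)$, the function $t\mapsto t^{-m/2}e^{t\mu}$ is minimised at $t = m/(2\mu)$, giving value $(m/(2\mu))^{-m/2}e^{m/2} = (2\mu/m)^{m/2}e^{m/2}$; then $(2t)^{-m/2} = (m/\mu)^{-m/2} = (\mu/m)^{m/2}$, so altogether
\begin{equation*}
|\phi_j(x)| \le (C_{2m}\mathcal{N}_b^m)^{1/2}\Big(\frac{\mu}{m}\Big)^{m/2} e^{m/2} = (C_{2m}\mathcal{N}_b^m e^m m^{-m})^{1/2}\,\lambda_j(\Omega,b)^{m/2},
\end{equation*}
which is exactly \eqref{e52} after taking the supremum over $x$.

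The remaining points are routine: $\phi_j\in L^\infty(\Omega)$ follows from the bound just proved, and $\phi_j\in L^1(\Omega)$ then follows since $|\Omega|<\infty$ and $\phi_j\in L^2(\Omega)\subset L^1(\Omega)$ on a finite measure space (or directly from $L^\infty\subset L^1$ here). I do not anticipate a serious obstacle; the only mild subtlety is making sure the pointwise identity $\phi_j(x)=e^{t\mu}\int_\Omega p_b(x,y;t)\phi_j(y)\,dy$ holds everywhere (not just a.e.), which is justified because the right-hand side, as a function of $x$, is continuous — indeed bounded and in the domain of the generator — so one may take the continuous representative of $\phi_j$; alternatively one simply works with essential suprema throughout, which suffices for the statement. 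The constant $\mathcal{N}_b$ is the one from Lemma \ref{Lem6} / \eqref{e32}, and $C_{2m}$ is as in \eqref{e17}, so the bound is explicit in $b$, $\lambda(\Omega,b)$ and $m$ as claimed.
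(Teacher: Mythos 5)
Your proof is correct and follows essentially the same route as the paper: both rest on the diagonal bound $p_b(x,x;s)\le C_{2m}\mathcal{N}_b^m s^{-m}$ from \eqref{e19} followed by an optimisation over the time parameter, yielding the identical constant in \eqref{e52}. The only cosmetic difference is that you derive $e^{-2t\lambda_j(\Omega,b)}\phi_j(x)^2\le p_b(x,x;2t)$ via Cauchy--Schwarz and the semigroup identity, whereas the paper drops all but the $j$-th term from the trace-class eigenfunction expansion $\sum_k e^{-t\lambda_k(\Omega,b)}\phi_k^2(x)=p_b(x,x;t)$; after relabelling the time variable these are the same inequality, and your treatment of the $L^1$ statement (Cauchy--Schwarz with $|\Omega|<\infty$) also matches the paper's.
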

\begin{proof}
By Cauchy-Schwarz and orthonormality we have that $\lVert
\phi_j\rVert_{L^{1}(\Omega)}\le |\Omega|^{1/2}.$ Since
$|\Omega|<\infty$ the heat semigroup is trace class, and so
\begin{equation*}
e^{-t\lambda_j(\Omega,b)}\phi_j^2(x)\le\sum_{j=1}^{\infty}e^{-t\lambda_j(\Omega,b)}\phi_j^2(x)\le
C_{2m}\mathcal{N}^m_bt^{-m}.
\end{equation*}
Hence \begin{equation}\label{e54} |\phi_j(x)|^2\le
C_{2m}\mathcal{N}^m_be^{t\lambda_j(\Omega,b)}t^{-m}.
\end{equation}
Taking the supremum over all $x\in \Omega$ in the left hand side
of \eqref{e54} followed by taking the infimum over all $t>0$ in
the right hand side gives the bound in \eqref{e52}.
\end{proof}

To see that $\tilde{\phi}_{b,R},$ defined above \eqref{e49}, is
bounded we note that by \eqref{e9}, $p_{b,R}(x,y;t)\le
p_b(x,y;t)\le C_{2m}\mathcal{N}^m_bt^{-m}.$ So
$e^{-t\lambda(\Omega_R,b)}\phi^2_{b,R}(x)\le p_{b,R}(x,x;t)\le
C_{2m}\mathcal{N}^m_bt^{-m}$. This shows that $\phi_{b,R}$ is
bounded.

We note that the $L^{\infty}$ estimate in \eqref{e52} together
with \eqref{e49} implies the following comparison estimate between
torsion function and first eigenfunction.
\begin{equation}\label{e55}
u_b(x)\ge(C_{2m}\mathcal{N}^m_be^mm^{-m})^{-1/2}\lambda(\Omega,b)^{-1-\frac{m}{2}}\phi_1(x).
\end{equation}
For $b\ge\lambda(\Omega,b)^{1/2}$ we use the second inequality in
\eqref{e32} to obtain that
\begin{equation*}
u_b(x)\ge C\lambda(\Omega,b)^{-1-\frac{m}{4}}\phi_1(x),
\end{equation*}
for some constant $C$ depending on $m$ only. This jibes with
Theorem 5.1 in \cite{MvdB3}. In general one cannot expect however,
that $u_b$ and $\phi_1$ are comparable. See Theorem 6 and the
discussion in Section 3 in \cite{MvdB3}. Similarly Theorem
\ref{The1} and Proposition \ref{Pro9} show that for
$b\ge\lambda(\Omega,b)^{1/2},$
\begin{equation*}
\lVert u_b^{m/4}\phi_1\rVert_{L^{\infty}(\Omega)}\le C',
\end{equation*}
where $C'$ depends on $m$ only. This jibes with Theorem 5.2 in
\cite{MvdB3}, and completes the analogy with the Dirichlet case in
this regime.

\section{Torsion Function and torsional rigidity for the $p$-Laplacian  \label{sec4}}

\subsection{Dirichlet boundary conditions}

In this section we consider the $p$-Laplacian for $1<p<+\infty$
with Dirichlet boundary conditions, corresponding formally to $b
=+\infty$. Let $\Om$ be an open and bounded set of $\R^m$ and
$w_\Om$ (or simply $ w$) the torsion function of the $p$-Laplacian
with Dirichlet boundary conditions. It is the unique solution of
$$\min_{u \in W^{1,p}_0(\Om)} \frac1p\int_\Om |\nabla u|^p  -\int_\Om u .$$
Let
$$\lb:= \min\{ \frac{\int_\Om |\nabla u|^p }{\int_\Om | u|^p } : u \in W^{1,p}_0(\Om) \sm \{0\}\}$$
be the first eigenvalue of the { $p$-Laplacian} with Dirichlet
boundary conditions on $\Om$.

These notions extend to every open set, not necessarily bounded,
by replacing the torsion function and the first eigenvalue with
suitable definitions on unbounded sets. The first eigenvalue has
to be replaced by the spectral bottom, and the torsion function by
the following Borel function (possibly infinite valued) obtained
by inner approximations
 $$w_\Om(x)=\sup_{R>0} w_{\Om\cap {B(0,R)}} (x).$$

We shall prove the following.
\begin{theorem}\label{bebu01}
There exists a constant $C_{m,p}$ such that for any open set
$\Om\sq \R^m$

\begin{equation}\label{bebu06}
\|w\|_{L^{\infty}(\Om)} \le C_{m,p} \lb^{-\frac{1}{p-1}}.
\end{equation}
\end{theorem}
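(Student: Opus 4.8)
The plan is to mimic the heat-equation argument used for $p=2$, but since there is no linear semigroup for the $p$-Laplacian, I would instead run a direct Moser-type iteration (or a truncation/comparison argument) combined with the Faber--Krahn-type control that the spectral bottom $\lb$ provides. Concretely, first reduce to the case of $\Om$ open and bounded (with, say, smooth boundary), because the general bound then follows by the inner-approximation definition $w_\Om(x)=\sup_{R>0}w_{\Om\cap B(0,R)}(x)$ and the monotonicity $\Om'\subseteq\Om\Rightarrow \lb(\Om)\le\lb(\Om')$, so a bound in terms of $\lb(\Om\cap B(0,R))$ passes to the limit. By scaling, one may further normalise: if $u$ solves $-\Delta_p u=1$ on $\Om$ with $\lb=\lb(\Om)$, then $u_t(x)=t^{-m}u(tx)$-type dilations change $\lb$ by $t^{-p}$ and $\|u\|_\infty$ by an explicit power, so it suffices to prove the estimate for $\lb=1$ and then restore the homogeneity $\|w\|_\infty\le C_{m,p}\lb^{-1/(p-1)}$.

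For the normalised problem the key tool is the variational characterisation: since $w\in W^{1,p}_0(\Om)$ minimises $\frac1p\int|\nabla u|^p-\int u$, testing the Euler--Lagrange equation $-\Delta_p w=1$ against $w$ gives $\int_\Om|\nabla w|^p=\int_\Om w$, and testing against $(w-k)_+^{\,\beta}$ for a truncation level $k>0$ and exponent $\beta\ge1$ yields, after using $\lb\le \int|\nabla v|^p/\int|v|^p$ on the superlevel function $v=(w-k)_+$, a recursive inequality controlling $\int (w-k)_+^{\,q}$ at a higher integrability $q$ in terms of a lower one, with constants depending only on $m,p$ (the Sobolev constant on $\R^m$ enters through $p^\ast=mp/(m-p)$ when $p<m$; for $p\ge m$ one uses a Moser--Trudinger or direct Morrey bound). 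Iterating this De Giorgi/Moser scheme along a geometric sequence of truncation levels $k_j$ drives the measure of the superlevel sets to zero and produces an $L^\infty$ bound of the form $\|w\|_\infty\le C_{m,p}\big(\int_\Om w\big)^{\theta}$ for a suitable exponent $\theta$, i.e.\ in terms of the ``torsional rigidity'' $P(\Om)=\int_\Om w$.

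It then remains to bound $P(\Om)=\int_\Om w$ purely in terms of $\lb$. Here I would use the equation $\int_\Om|\nabla w|^p=\int_\Om w$ together with $\int_\Om|\nabla w|^p\ge \lb\int_\Om|w|^p$ and H\"older's inequality relating $\int w$, $\int w^p$, and $\|w\|_\infty$; combined with the Moser bound $\|w\|_\infty\le C_{m,p}P(\Om)^\theta$ from the previous step, this closes a self-improving system of inequalities in the two unknowns $\|w\|_\infty$ and $P(\Om)$, solving to give $\|w\|_\infty\le C_{m,p}\lb^{-1/(p-1)}$ with the exponent forced by homogeneity. The main obstacle I anticipate is making the Moser iteration genuinely \emph{dimension- and $p$-explicit} for an \emph{arbitrary} open set with no regularity and possibly infinite measure: one must be careful that the only geometric input is the sharp Sobolev inequality on $\R^m$ (extending $w$ by zero) and the spectral lower bound $\lb$, never the measure $|\Om|$ — which is precisely the point the authors flag as doable for Dirichlet conditions but not (yet) for Robin. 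The borderline and supercritical ranges $p\ge m$ will require a separate, slightly different iteration, but the final form of the bound is the same.
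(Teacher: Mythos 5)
There is a genuine gap, and it sits at the final step of your plan: you propose to bound $\|w\|_{\infty}$ by a power of the torsional rigidity $P(\Om)=\int_\Om w$ via a Moser/De Giorgi iteration, and then to ``bound $P(\Om)$ purely in terms of $\lb$''. That second step is impossible. The quantity $\int_\Om w$ is a global, extensive quantity and is not controlled by the spectral bottom alone: for the slab $\Om=(0,1)\times\R^{m-1}$ one has $\lb>0$ while $\int_\Om w=+\infty$, and for a disjoint union of $N$ congruent balls $\lb$ is independent of $N$ while $\int_\Om w$ grows linearly in $N$. For the same reason your proposed closure of the ``self-improving system'' cannot work: from $\int_\Om|\nabla w|^p=\int_\Om w\ge \lb\int_\Om w^p$ one cannot extract an upper bound on $\int_\Om w$ without the measure $|\Om|$ entering (H\"older in the useful direction costs a factor $|\Om|^{p-1}$), and the theorem is claimed precisely for sets of possibly infinite measure. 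Any route that funnels the $L^\infty$ estimate through $P(\Om)$, $|\Om|$, or $\|w\|_{L^q}$ for finite $q$ is therefore structurally blocked; note also that a bound in powers of $|\Om|$ would be strictly weaker than the statement, since Faber--Krahn gives $\lb\gtrsim|\Om|^{-p/m}$ and not the reverse.

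What is missing is a localization, and this is exactly how the paper proceeds: one extends $w$ by zero so that $-\Delta_p w\le 1$ on $\R^m$, proves a Caccioppoli inequality $\int|\nabla(w\theta)|^p\le c_1\int w\theta^p+c_2\int|\nabla\theta|^p w^p$ for Lipschitz cutoffs $\theta$, invokes the pointwise subsolution estimate of Mal\'y/Trudinger, $w(0)\le C\bigl(r^{-m}\int_{B(0,r)}w^p\bigr)^{1/p}+C'r^{p/(p-1)}$ (with a separate but elementary treatment of $p>m$ via Morrey plus Caccioppoli), and then inserts the \emph{local} test function $w\theta_R\in W^{1,p}_0(\Om)$ into the Rayleigh quotient defining $\lb$, choosing $R\sim w(0)^{(p-1)/p}$. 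All quantities are then supported on $B(0,2R)$, the measure never appears, and one reads off $\lb\le C/w(0)^{p-1}$, i.e.\ \eqref{bebu06}. Your opening reductions (inner approximation, extension by zero, scaling) are fine and coincide with the paper's, and an iteration of the type you sketch can indeed yield $\|w\|_\infty\le C_{m,p}\bigl(\int_\Om w\bigr)^{p/(m(p-1)+p)}$, but to prove the theorem you must couple $\lb$ to $w$ through local averages on balls of radius comparable to $\|w\|_\infty^{(p-1)/p}$ rather than through any global integral of $w$; as written, the argument does not close.
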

Theorem \ref{bebu01} extends the inequality obtained in
\cite{MvdB2} and \cite{MvdB3} to the $p$-Laplacian, for which we
give an elliptic proof. We refer the reader to \cite[Theorem
13]{BE}, where this inequality is proved for convex sets.

{\noindent \it Proof of Theorem \ref{bebu01}.} It is enough to
consider a smooth, bounded open set $\Om$ and approach a general
open set with an increasing sequence of smooth inner sets. We
extend the torsion function $w$ to all of $\R^m$ by $0$, and
denote the new function again by $w$. Then $w$ satisfies
$$-\Delta_p w \le 1,$$
in the sense that
\begin{equation}\label{ew}
\forall \vphi \in C_c^\infty (\R^m), \vphi \ge 0,\; \int_{\R^m}
|\nabla w |^{p-2}\nabla w \nabla \vphi  \le \int_{\R^m} \vphi .
\end{equation}

We prove the following Cacciopoli type inequality.
\begin{lemma}\label{bebu02}
For every $c_1 > 2^{p-1}$ there exists $c_2$ depending on $c_1$,
$m$ and $p$ such that for every $\theta \in W^{1, \infty} (\R^m)$
we have
$$\int_{{\R^m}} |\nabla (w\theta)|^p  \le c_1 \int_{{\R^m}} w |\theta|^p  + c_2 \int_{{\R^m}} |\nabla \theta |^p w^p.$$
\end{lemma}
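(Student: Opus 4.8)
The plan is to use $\vphi=w\,|\theta|^p$ as a test function in the distributional inequality \eqref{ew}. Although \eqref{ew} is stated only for $\vphi\in C_c^\infty(\R^m)$, it extends by a routine density argument to every nonnegative $\vphi\in W^{1,p}(\R^m)$ with compact support: since $w\in W^{1,p}_0(\Om)$ (extended by $0$) we have $|\nabla w|^{p-2}\nabla w\in L^{p'}(\R^m)$, so both sides of \eqref{ew} are continuous for the $W^{1,p}$-topology on the space of compactly supported functions, and mollification produces nonnegative $C_c^\infty$ approximants converging in $W^{1,p}$. Because $\theta\in W^{1,\infty}(\R^m)$, the function $\vphi=w\,|\theta|^p$ is nonnegative, compactly supported in $\ov\Om$, and belongs to $W^{1,p}(\R^m)$, with
$$\nabla\vphi=|\theta|^p\,\nabla w+p\,w\,|\theta|^{p-2}\theta\,\nabla\theta,$$
the chain rule being applicable because $t\mapsto|t|^p$ is $C^1$ with continuous derivative $p\,|t|^{p-2}t$ (equal to $0$ at $t=0$ even when $1<p<2$) and $\theta$ is bounded.

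Substituting $\vphi$ into \eqref{ew} and isolating the term $\int_{\R^m}|\theta|^p|\nabla w|^{p-2}|\nabla w|^2=\int_{\R^m}|\theta|^p|\nabla w|^p$ gives
$$\int_{\R^m}|\theta|^p|\nabla w|^p\le\int_{\R^m}w\,|\theta|^p-p\int_{\R^m}w\,|\theta|^{p-2}\theta\,|\nabla w|^{p-2}\nabla w\cdot\nabla\theta,$$
and the absolute value of the last integral is at most $p\int_{\R^m}w\,|\theta|^{p-1}|\nabla w|^{p-1}|\nabla\theta|$. I would then apply Young's inequality with exponents $p'$ and $p$ to the pair $\big(|\theta|^{p-1}|\nabla w|^{p-1},\,w\,|\nabla\theta|\big)$, using $(p-1)p'=p$: for every $\delta>0$ there is a constant $C(\delta,p)$ with
$$p\,w\,|\theta|^{p-1}|\nabla w|^{p-1}|\nabla\theta|\le\delta\,|\theta|^p|\nabla w|^p+C(\delta,p)\,w^p|\nabla\theta|^p.$$
Choosing $\delta<1$ and absorbing the first term on the left yields
$$\int_{\R^m}|\theta|^p|\nabla w|^p\le\frac{1}{1-\delta}\int_{\R^m}w\,|\theta|^p+\frac{C(\delta,p)}{1-\delta}\int_{\R^m}w^p|\nabla\theta|^p.$$

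Finally I would pass from $|\nabla w|$ to $|\nabla(w\theta)|$ via $\nabla(w\theta)=\theta\,\nabla w+w\,\nabla\theta$ and the convexity inequality $(a+b)^p\le 2^{p-1}(a^p+b^p)$, which gives
$$\int_{\R^m}|\nabla(w\theta)|^p\le 2^{p-1}\int_{\R^m}|\theta|^p|\nabla w|^p+2^{p-1}\int_{\R^m}w^p|\nabla\theta|^p.$$
Combined with the previous bound this produces $\int_{\R^m}|\nabla(w\theta)|^p\le\frac{2^{p-1}}{1-\delta}\int_{\R^m}w\,|\theta|^p+c_2\int_{\R^m}w^p|\nabla\theta|^p$ with $c_2=\frac{2^{p-1}C(\delta,p)}{1-\delta}+2^{p-1}$. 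Given $c_1>2^{p-1}$, one picks $\delta=\delta(c_1,p)\in(0,1)$ small enough that $\frac{2^{p-1}}{1-\delta}\le c_1$ — possible precisely because $c_1>2^{p-1}$ — and the lemma follows (the resulting $c_2$ depends only on $c_1$ and $p$, hence a fortiori on $m$). The only genuinely delicate point is the first paragraph: justifying that the non-smooth, compactly supported $\vphi=w\,|\theta|^p$ is an admissible test function in \eqref{ew} and that the Sobolev chain rule for $|\theta|^p$ applies near $\{\theta=0\}$ when $1<p<2$; everything afterwards is a standard Young-inequality absorption, and the constant $2^{p-1}$ is exactly the one coming from the convexity inequality, which is why it cannot be improved by this argument.
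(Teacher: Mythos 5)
Your proposal is correct and follows essentially the same route as the paper: testing \eqref{ew} with $\vphi=w\theta^p$ (the paper reduces to $\theta\ge 0$, you keep $|\theta|^p$), controlling the cross term by Young's inequality, and passing to $|\nabla(w\theta)|^p$ via the convexity bound $2^{p-1}$, which is exactly why the threshold $c_1>2^{p-1}$ appears in both arguments. The only differences are cosmetic — you absorb the Young term before invoking convexity and you spell out the density argument admitting the non-smooth test function, a point the paper leaves implicit.
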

\begin{proof}
Without loss of the generality, we may assume that $\theta \ge 0$.
By taking $\vphi=w\theta^p$ as a test function in \eqref{ew} it
suffices to prove that
$$\int_{{\R^m}} |\nabla (w\theta)|^p  \le c_1 \int_{{\R^m}} |\nabla w |^{p-2}\nabla w     \nabla (w |\theta|^p)
 + c_2 \int_{{\R^m}} |\nabla \theta |^p w^p .$$
Since
$$\int_{{\R^m}} |\nabla (w\theta)|^p  \le 2^{p-1} \int_{{\R^m}}\left( |\nabla w|^p \theta^p + |\nabla \theta|^p w^p\right) ,$$
and
$$\int_{{\R^m}} |\nabla w |^{p-2}\nabla w     \nabla (w |\theta|^p)   = \int_{{\R^m}} |\nabla w |^{p} \theta ^p  + p \int_{{\R^m}}  |\nabla w |^{p-2}\nabla w
    \nabla \theta \theta ^{p-1} w $$
it suffices to prove that \begin{align*}
 -p c_1\int_{{\R^m}} &|\nabla w |^{p-2}\nabla w     \nabla \theta\,
\theta ^{p-1} w\nonumber \\ &
  \le (c_1-2^{p-1})  \int_{{\R^m}} |\nabla w |^{p} \theta ^p + (c_2- 2^{p-1})  \int_{{\R^m}} |\nabla \theta|^p w^p {,}
  \end{align*}
or even
$$\int_{{\R^m}}  |\nabla w |^{p-1 }\theta ^{p-1} |\nabla \theta| w \le  \frac{c_1-2^{p-1}}{pc_1}  \int_{{\R^m}} |\nabla w |^{p} \theta ^p +
  \frac{c_2- 2^{p-1}}{pc_1}  \int_{{\R^m}} |\nabla \theta|^p w^p.$$
This last inequality is a consequence of Young's inequality, for
$c_2$ given by
$$c_2=2^{p-1}+ c_1 \Big ( \frac{(p-1)c_1}{c_1-2^{p-1}}\Big )^{p-1}.$$
\end{proof}
In order to get a pointwise bound of $w$ in terms of the average
of $w$ on balls, we recall the following result from \cite{ma96}
(see also \cite{tr67}).

\begin{lemma}\label{bebu03}

Let $p \in (1,m]$ and $u \in W^{1,p}(\R^m)$, $u \ge 0$ and
$-\Delta_p u \le 1$. Let $\gamma \in (p-1,
\frac{N(p-1)}{N-(p-1)})$. Then there exist two constants $C, C'$
independent { of} $u$ such that
$$u(0) \le C\Big ( \int_{{ B(0,1)}}u ^\gamma \Big )^\frac{1}{\gamma} +
C'.$$
\end{lemma}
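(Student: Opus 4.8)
The plan is to read Lemma \ref{bebu03} as the local boundedness (``weak maximum'') estimate for nonnegative subsolutions of the $p$-Laplacian, of De Giorgi--Nash--Moser type (Serrin, Trudinger \cite{tr67}), the only new feature being the inhomogeneous right-hand side, which I would remove by a shift in the unknown. Throughout write $B_r:=B(0,r)$ and recall $N=m$; the value $u(0)$ is to be read through the precise (upper semicontinuous) representative of $u$, which is legitimate once local boundedness is established. The core step is the a priori bound
$$\|v\|_{L^\infty(B_{1/2})}\le C\Big(\int_{B_1}v^{\gamma}\Big)^{1/\gamma}$$
for every nonnegative $v\in W^{1,p}_{\mathrm{loc}}$ satisfying $-\Delta_p v\le v^{p-1}$ on $B_2$, with $C=C(m,p,\gamma)$.

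This suffices. Put $v:=u+1\ge 1$. The inequality $-\Delta_p u\le 1$ is invariant under adding constants to the unknown, so $-\Delta_p v\le 1$ in the weak sense of \eqref{ew}; and since $v\ge 1$ we have $1\le v^{p-1}$, hence $v$ is a nonnegative subsolution of $-\Delta_p v\le v^{p-1}$ on $B_2$. Applying the displayed bound, subtracting $1$, and using $\|v\|_{L^{\gamma}(B_1)}\le\|u\|_{L^{\gamma}(B_1)}+|B_1|^{1/\gamma}$ gives $u(0)\le C(\int_{B_1}u^{\gamma})^{1/\gamma}+C'$ with $C'=C\,|B_1|^{1/\gamma}$. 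Only $\gamma>p-1$ will be used below (to keep a Caccioppoli exponent positive); the upper restriction on $\gamma$ in the hypothesis is harmless but not needed for this direction.

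For the a priori bound I run a Moser iteration. Fix $q\ge\gamma/p$ and set $\beta:=qp-(p-1)>0$. Testing $-\Delta_p v\le v^{p-1}$ with $\vphi=\eta^p v^{\beta}$, where $\eta\in C_c^\infty(B_2)$, $0\le\eta\le1$ (after first replacing $v$ by $v\wedge\ell$ so that all integrals are finite, and letting $\ell\to\infty$ at the end), expanding $\nabla(\eta^pv^{\beta})$, and absorbing the cross term by Young's inequality with parameter comparable to $\beta$, the right-hand side contributes exactly $\int\eta^p v^{\beta+p-1}=\int\eta^p v^{qp}$, and one obtains
$$\int_{B_2}\big|\nabla(\eta v^{q})\big|^p\le C(p,q)\int_{B_2}\big(|\nabla\eta|^p+\eta^p\big)\,v^{qp},$$
with $C(p,q)$ bounded by a fixed power of $q$ (the $1/\beta$ factors are harmless). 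Combined with the Sobolev inequality $\|\eta v^{q}\|_{L^{p^*}}^p\le C_m\int|\nabla(\eta v^{q})|^p$, $p^*=\tfrac{mp}{m-p}$ (when $p=m$, replace $p^*$ by any fixed exponent $>p$, using $W^{1,m}\hookrightarrow L^{s}$ for all finite $s$), and a cutoff $\eta$ equal to $1$ on $B_{r'}$, supported in $B_r$ with $|\nabla\eta|\le 2/(r-r')$, this yields the one-step improvement
$$\|v\|_{L^{q\chi p}(B_{r'})}\le\Big(\frac{C(p,q)}{(r-r')^{p}}\Big)^{1/(pq)}\|v\|_{L^{qp}(B_r)},\qquad \chi:=\frac{p^*}{p}>1.$$

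Finally I iterate with $q_j:=(\gamma/p)\chi^{\,j}$ and $r_j:=\tfrac12+2^{-j-1}$, so that $q_jp\to\infty$, $B_{r_j}\downarrow B_{1/2}$ and $r_j-r_{j+1}=2^{-j-2}$. Writing $\Phi_j:=\|v\|_{L^{q_jp}(B_{r_j})}$, the one-step estimate gives $\Phi_{j+1}\le A_j\Phi_j$ with $\log A_j\le \dfrac{C(m,p,\gamma)\,(1+j)}{q_0\,\chi^{\,j}}$, hence $\sum_j\log A_j<\infty$ and $\Phi_j\le\big(\prod_{i<j}A_i\big)\Phi_0\le C(m,p,\gamma)\|v\|_{L^{\gamma}(B_1)}$ uniformly in $j$; letting $j\to\infty$ and using $\|v\|_{L^\infty(B_{1/2})}=\lim_{s\to\infty}\|v\|_{L^{s}(B_{1/2})}\le\liminf_j\Phi_j$ closes the argument. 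I expect the main obstacle to be the bookkeeping of this iteration: one must verify that the Caccioppoli/Young constant $C(p,q_j)$ grows at most polynomially in $q_j$, so that its $(pq_j)$-th roots $A_j$ have summable logarithm, and one must justify the truncation-and-limit that bypasses the a priori unknown higher integrability of $v$. Both are routine; the one genuinely simplifying idea is the shift $v=u+1$, which turns the inhomogeneous problem into one with the ``critical but benign'' right-hand side $v^{p-1}$, for which the iteration closes with a constant that in fact tends to $1$.
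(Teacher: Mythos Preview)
Your argument is correct. The paper's own proof consists of the single sentence ``This is a consequence of \cite[Theorem 3.3]{ma96}'' (the introductory paragraph also points to Trudinger \cite{tr67}); you have in effect supplied a self-contained version of the latter route. The shift $v=u+1$, which turns the inhomogeneous bound $-\Delta_p u\le 1$ into the homogeneous subsolution inequality $-\Delta_p v\le v^{p-1}$ (using $v\ge 1$), is the key simplifying device, and the subsequent Moser iteration is standard and correctly sketched, including the observation that only the lower restriction $\gamma>p-1$ is actually used (it is precisely what makes $\beta_0=\gamma-(p-1)>0$). The one substantive difference is that Mal\'y's theorem delivers a genuine \emph{pointwise} estimate at the point $0$, whereas your iteration yields only an essential-supremum bound on $B_{1/2}$; you handle this via the precise representative, which is adequate here and certainly suffices for the application in Theorem~\ref{bebu01}, where the torsion function $w$ is continuous. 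A minor bookkeeping remark: when $\gamma<1$ (possible if $p<2$) the triangle inequality $\|u+1\|_{L^\gamma}\le\|u\|_{L^\gamma}+|B_1|^{1/\gamma}$ needs a constant depending on $\gamma$, but this only affects $C'$.
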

\begin{proof}
This is a consequence of \cite[Theorem 3.3]{ma96}.
\end{proof}

\noindent We now continue our proof of Theorem \ref{bebu01}.
Clearly, by re-scaling we get that
\begin{equation*}
u(0) \le C\Big ( \frac{1}{r^m}\int_{{ B(0,r)}}u ^\gamma  \Big
)^\frac{1}{\gamma} + C' r^\frac{p}{p-1}.
\end{equation*}
We shall { choose} $\gamma \in (p-1,p)$, close to $p-1$. For
such a $\gamma$ we have { by} H\"older's inequality
$$\Big ( \frac{1}{r^m}\int_{{ B(0,r)}}u ^\gamma \Big )^\frac{1}{\gamma}
\le \Big ( \frac{1}{r^m}\int_{{ B(0,r)}}u ^p  \Big )^\frac{1}{p}\omega_m^{1-\frac{\gamma}{p}}.$$
So changing the constant $C$ we have for $p \in (1,m]$
\begin{equation}\label{bebu05}
u(0) \le C\Big ( \frac{1}{r^m}\int_{{ B(0,r)}}u ^p  \Big
)^\frac{1}{p} + C' r^\frac{p}{p-1}.
\end{equation}
If $p>m$, this inequality holds as well. This is a consequence of
the continuous embedding of $W^{1,p} (B_1(0))$ in $L^\infty
(B_1(0))$ and of Lemma \ref{bebu02}. Indeed, from Lemma \ref
{bebu02}, there exist constants (which may change from line to
line) such that
$$\int_{{ B(0,1/2)}} |\nabla u|^p  \le c_1 \int_{{ B(0,1)}} u + c_2\int_{{ B(0,1)}} u^p $$
$$\le C  \int_{{ B(0,1)}} u^p  + C'.$$
On the other hand
$$\|u\|_{L^\infty ({ B(0,1/2)})} \le C \|u\|_{L^p ({ B(0,1/2)})} + C\|\nabla u\|_{L^p ({B(0,1/2)})}, $$
so { that}
$$\|u\|_{L^\infty ({ B(0,1/2)})} \le   C \|u\|_{L^p ({ B(0,1)})} + C'.$$
By re-scaling, we obtain inequality \eqref{bebu05}.

Let now $\theta \in C_c^\infty (B_2(0)$, $0\le \theta \le 1$,
$\theta \equiv 1$ on $B_1(0)$. Let $\theta_R(x)= \theta
(\frac{x}{R})$. Then $w\theta \in W^{1,p}_0(\Om)$, so
$$\lb(\Om) \le \frac{\ds \int_\Om |\nabla (w\theta)|^p }{\ds \int_\Om w^p\theta^p }$$
and from Lemma \ref{bebu02}
$$\lb(\Om)  \ds \le \frac{\ds c_1 \int_\Om  w \theta^p  + c_2 \int_\Om |\nabla \theta |^p w^p }{\ds \int_\Om w^p\theta^p }.$$
Using inequality \eqref{bebu05} we have {for $R$} small
enough
$$\frac{R^m}{C^p} \Big ( w(0)-C'R^\frac{p}{p-1}\Big )^p \le \int_{{ B(0,R)}}w^p.$$
{ At} the same time
$$\int_{{ B(0,2R)}}\theta_R^p  \le \omega_m 2^m R^m\;\;\mbox{and}\;\; \int_{{B(0,2R)}}|\nabla \theta_R|^p \le C''R^{m-p}.$$
As a consequence, renaming constants, we get
$$\lb(\Om)  \ds \le \frac{\ds C w(0)R^m + C'w(0)^pR^{m-p}}{R^m \Big ( w(0)-C''R^\frac{p}{p-1}\Big )^p}.$$
We choose $R$ such that
$$w(0)= 2C''R^\frac{p}{p-1},$$
and so
$$\lb(\Om) \le \frac{\tilde C}{R^p}
=\frac{2^{p-1}C''^{p-1} \tilde C}{w(0)^{p-1}},$$ where
$$\tilde C= \frac{2CC''+ (2C'')^p C'}{C''^p}.$$
This concludes the proof of inequality
\eqref{bebu06}.\hspace*{\fill }$\square $


\subsection{Robin boundary conditions}

The $p$-torsion function with Dirichlet boundary conditions on the
ball $B(0,R)$ is given by
\begin{equation*}
u(x)=(p-1)p^{-1}m^{-(p-1)^{-1}}(R^{p/(p-1)}-|x|^{p/(p-1)}), \
|x|<R.
\end{equation*}
Hence the solution is bounded, positive and regular.  It follows
by the comparison- and regularity theorems that if $\Omega\subset
B(0,R)$ then the $p$-torsion function for $\Omega$ with Dirichlet
boundary conditions satisfies
\begin{equation*}
\lVert u \rVert_{L^{\infty}(\Omega)}\le
(p-1)p^{-1}m^{-(p-1)^{-1}}R^{p/(p-1)}.
\end{equation*}
See \cite{BE} and the references therein. In this section we
obtain some results for the $p$-torsion function with Robin
boundary conditions and the corresponding torsional rigidity.

Let $p>1$ and $\Om \subset \R^m$ be an open set of finite measure.
We introduce the torsion function for the $p$-Laplacian with Robin
boundary conditions relaying on the $W^1_{p,p}(\Om,
\partial \Om)$-spaces (see \cite{M}). All results of this section can be
rephrased in the framework introduced in \cite{BG}, where the
Robin problem in non-smooth sets is defined by using the $SBV$-
spaces (see Remark \ref{0989.5} at the end of the section).

The torsion function $u_b$ is the unique weak solution of
\begin{equation*}
-\Delta_p u=1  \mbox { in } { \Omega,}\ \ \  |\nabla u|^{p-2} \frac{\partial u}{\partial n} + b |u|^{p-2} u=0 \mbox { on } {\partial \Omega},
\end{equation*}
which is the minimizer in $W^1_{p,p}(\Om, \partial \Om)$ of the energy
\begin{equation}\label{e59}
v \mapsto \int_\Om |\nabla v|^p  +b \int_{\partial \Om} |v|^p d\Hm
-p \int_\Om v .
\end{equation}

Let us notice that $u$ is non-negative and continuous in $\Om$. We
introduce the family of open sets $U_t=\{u > t, t\ge 0\}$ and
denote by $\lb (\Omega, b)$ the first Robin eigenvalue for the
open set $\Omega$ associated to the Robin constant $b$, which is
defined as
\begin{equation}\label{e60}\lb (\Omega, b) = \inf \Big \{ \frac{ \int_\Om |\nabla v|^p  +b \int_{\partial \Om}
 |v|^p d\Hm}{  \int_\Om | v|^p } : v \in W^1_{p,p}(\Om, \partial \Om), v\not=0\Big \}.
 \end{equation}
 Throughout this section we suppress the $p$-dependence of the first
 Robin eigenvalue, torsion function etc.
The isoperimetric inequality for the first
eigenvalue of the Robin $p$-Laplacian in a non-smooth setting (see
\cite{BG}) states that
\begin{equation}\label{e61}
\lb (U_t, b) \ge \lb (U^*_t, b),
\end{equation}
where we adopt the usual notation: for any measurable set $A$ with finite Lebesgue measure $A^*$ stands for the ball of measure ${|A|}$ centered at $0$.

Let us define the constants
\begin{equation*}
 c_1=
\frac{m}{m(p-1)+1},\end{equation*}
\begin{equation*}
c_2=p^{\frac{m}{m(p-1)+1}}(m(p-1)+1),
\end{equation*}
\begin{equation*}
c_3=\frac{1}{(p-1)(m(p-1)+1)}, \end{equation*}
\begin{equation*}\label{e65} h(t)= \frac{|\Om|^{\frac{1}{m}} }
{|U_t|^{\frac{1}{m}}}. \end{equation*}
 We put $U_t^\sharp =h(t)
U_t$ so that $|U_t^\sharp|=|\Omega|.$ We prove the following result.
\begin{theorem}\label{The3}
For every open set of finite measure, the torsion function $u_b$ belongs to $L^{\infty} (\Om)$ and
\begin{equation}\label{e66}
\int_0^ {\|u_b\|_{L^{\infty}(\Omega)}}   \left(h(t)^{p-1} \lb
(\Omega ^*, \frac{b}{h(t)^{p-1}})\right)^ {c_1}  dt \le\frac {
c_2} {\lb (\Om, b)^{c_3}}.
\end{equation}
\end{theorem}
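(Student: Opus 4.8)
The plan is to run a Talenti--Alvino--Lions--Trombetti type symmetrization argument on the superlevel sets $U_t=\{u_b>t\}$, converting the Faber--Krahn inequality \eqref{e61} into a differential inequality for the mass function $F(t):=\int_\Om(u_b-t)^+\,dx$ and then integrating it. As indicated by the authors it suffices to treat $\Om$ smooth and bounded, the general case following by inner approximation (and, on nonsmooth sets, by the $SBV$-formulation of the Robin problem from \cite{BG}). Since $|\Om|<\infty$, Faber--Krahn with $U=\Om$ gives $\lb(\Om,b)\ge\lb(\Om^*,b)>0$, so \eqref{e59} is coercive and $u_b$ exists; $u_b\ge0$ and $u_b$ is continuous in $\Om$ by interior $p$-Laplacian regularity. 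That $u_b\in L^\infty(\Om)$ follows from a single De Giorgi truncation: using $(u_b-k)^+$ as a test function in the weak form of $-\Delta_p u_b=1$ gives $\int_{U_k}|\nabla u_b|^p\le F(k)$, and the standard iteration then forces $|U_k|=0$ for $k$ large; write $T:=\|u_b\|_{L^\infty(\Om)}$. Set $\mu(t):=|U_t|$ and $G(t):=\int_\Om((u_b-t)^+)^p\,dx$. Then $F$ is Lipschitz with $F'=-\mu$ a.e., $F(0)=\int_\Om u_b=:P(\Om)$, $F(T)=0$, and $G(t)\ge F(t)^p/\mu(t)^{p-1}$ by H\"older.

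The heart of the argument is a pointwise-in-$t$ estimate. Using $\vphi=(u_b-t)^+$ in the weak form of the equation gives
$$\int_{U_t}|\nabla u_b|^p\,dx+b\int_{\partial\Om}u_b^{p-1}(u_b-t)^+\,d\Hm=F(t),$$
while testing the Rayleigh quotient defining $\lb(U_t,b)$ with $v:=(u_b-t)\rest_{U_t}$ (which vanishes on $\partial U_t\cap\Om$ by continuity, and satisfies $((u_b-t)^+)^p\le u_b^{p-1}(u_b-t)^+$ on $\partial U_t\cap\partial\Om$ because $t\ge0$) bounds the numerator of that quotient by the left-hand side above, hence by $F(t)$, the denominator being $G(t)$. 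Thus $\lb(U_t,b)\le F(t)/G(t)\le(\mu(t)/F(t))^{p-1}$. Since $|U_t^\sharp|=|\Om|$ we have $U_t^*=h(t)^{-1}\Om^*$, and the dilation law $\lb(\rho D,b)=\rho^{-p}\lb(D,b\rho^{p-1})$ gives $h(t)^{p-1}\lb(\Om^*,b/h(t)^{p-1})=h(t)^{-1}\lb(U_t^*,b)$. Feeding in \eqref{e61}, $h(t)^{-1}=(\mu(t)/|\Om|)^{1/m}$ and $\mu=-F'$, and using the identity $c_1(p-1+\tfrac1m)=1$, one arrives at
$$\Big(h(t)^{p-1}\lb\big(\Om^*,b/h(t)^{p-1}\big)\Big)^{c_1}\le\frac{-F'(t)}{|\Om|^{c_1/m}F(t)^{c_1(p-1)}}\qquad\text{for a.e. }t\in(0,T).$$

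Integrating over $(0,T)$ and substituting $y=F(t)$, which runs from $P(\Om)$ down to $0$, and using $c_1(p-1)<1$ together with $1-c_1(p-1)=c_1/m$, yields
$$\int_0^T\Big(h(t)^{p-1}\lb\big(\Om^*,b/h(t)^{p-1}\big)\Big)^{c_1}dt\le\frac{P(\Om)^{c_1/m}}{|\Om|^{c_1/m}(1-c_1(p-1))}=(m(p-1)+1)\Big(\frac{P(\Om)}{|\Om|}\Big)^{c_1/m}.$$
It remains to bound $P(\Om)/|\Om|$. Testing the Rayleigh quotient for $\lb(\Om,b)$ with $u_b$, and using the weak form with $\vphi=u_b$, which gives $\int_\Om|\nabla u_b|^p+b\int_{\partial\Om}u_b^p\,d\Hm=P(\Om)$, yields $\lb(\Om,b)\int_\Om u_b^p\le P(\Om)$; combined with $\int_\Om u_b^p\ge P(\Om)^p|\Om|^{1-p}$ (H\"older) this gives $P(\Om)/|\Om|\le\lb(\Om,b)^{-1/(p-1)}$. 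Since $c_1/(m(p-1))=c_3$ and $p^{c_1}\ge1$, the last display is $\le(m(p-1)+1)\lb(\Om,b)^{-c_3}\le c_2\lb(\Om,b)^{-c_3}$, which is \eqref{e66}. (Alternatively $T<\infty$ can be read off the finiteness of this bound: $\beta\mapsto\lb(\Om^*,\beta)$ is concave with value $0$ at $\beta=0$, so $\lb(\Om^*,\beta)/\beta$ is nonincreasing, whence $h(t)^{p-1}\lb(\Om^*,b/h(t)^{p-1})$ stays bounded below by a positive constant as $h(t)\to\infty$; since $\mu(t)\to0$ and hence $h(t)\to\infty$ when $t\to\infty$, an infinite $T$ would make the integral diverge.)

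The routine parts are the exponent bookkeeping ($c_1(p-1+\tfrac1m)=1$, $1-c_1(p-1)=c_1/m$, $c_1/(m(p-1))=c_3$) and the final integration. The genuinely delicate point is the measure-theoretic one: justifying, for a.e.\ $t$, the decomposition $\partial U_t=(\partial U_t\cap\Om)\cup(\partial U_t\cap\partial\Om)$ at the level of traces and perimeters, and verifying that $(u_b-t)\rest_{U_t}$ is an admissible competitor for $\lb(U_t,b)$ whose boundary integral is the one appearing above. In the smooth case this rests on continuity of $u_b$ and the coarea formula; in general it is precisely what the $SBV$-framework of \cite{BG} (to which \eqref{e61} belongs) is designed to handle, and the passage from smooth $\Om$ is by approximation.
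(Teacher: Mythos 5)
Your proposal is correct and its skeleton is the same as the paper's: a level-set analysis of $f(t)=\int_\Om (u_b-t)^+$, the isoperimetric inequality \eqref{e61} applied after rescaling the level sets to measure $|\Om|$, integration of the resulting differential inequality using $f'=-|U_t|$ and the exponent identities for $c_1,c_3$, and finally the bound $f(0)=\|u_b\|_{L^1(\Om)}\le |\Om|\,\lb(\Om,b)^{-1/(p-1)}$ exactly as in \eqref{e87}--\eqref{e88}. The genuine differences are local. First, you obtain the key inequality $\lb(U_t,b)\int_{U_t}((u_b-t)^+)^p\le f(t)$ by testing the Euler--Lagrange equation with $(u_b-t)^+$ and using $((u_b-t)^+)^p\le u_b^{p-1}(u_b-t)^+$ on $\partial\Om$, whereas the paper compares the energies \eqref{e59} of $u_b$ and of the truncation $u_b\wedge t$; your version even saves the factor $p$, so your final constant $(m(p-1)+1)$ is slightly better than $c_2=p^{c_1}(m(p-1)+1)$, and \eqref{e66} follows a fortiori. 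Second, for finiteness of $\|u_b\|_{L^\infty(\Om)}$ the paper invokes Lemma \ref{Lem8} (the exact limit $mb$ of $h^{p-1}\lb(\Om^*,b/h^{p-1})$ as $h\to\infty$), while your parenthetical argument only uses that $\beta\mapsto\lb(\Om^*,\beta)$ is concave with value $0$ at $\beta=0$, hence $h^{p-1}\lb(\Om^*,b/h^{p-1})\ge\lb(\Om^*,b)>0$ for $h\ge 1$; this is simpler and suffices, since only a positive lower bound on the integrand is needed to force divergence if the sup norm were infinite. One caution: your opening reduction to smooth bounded $\Om$ by inner approximation, and the De Giorgi truncation that relies on it, are neither needed nor easy to justify -- Robin eigenvalues and torsion functions are not monotone under domain inclusion, and the approximating problems need not converge to the original one; the paper works directly in $W^1_{p,p}(\Om,\partial\Om)$ on an arbitrary open set of finite measure, and every step of your main argument (including the trace decomposition $\partial U_t=(\partial U_t\cap\Om)\cup(\partial U_t\cap\partial\Om)$ with $u_b=t$ on $\partial U_t\cap\Om$ by interior continuity, which is also how the paper handles the point you flag) goes through verbatim in that setting, so you should simply drop the reduction and rely on your concavity argument for boundedness.
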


In the proof of Theorem \ref{The3} we will need the following.
\begin{lemma}\label{Lem8} Let $B$ be any open ball in $\R^m$ with finite Lebesgue measure. Then
\begin{equation*}
\lim_{\alpha  \downarrow 0 } \frac{ \lb (B, b \alpha )}{\alpha} = mb .
\end{equation*}
\end{lemma}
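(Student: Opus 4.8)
The plan is to sandwich $\alpha^{-1}\lb(B,b\alpha)$ between two quantities that both converge to $b\,\Hm(\partial B)/|B|$ as $\alpha\downarrow 0$. For the upper bound I would test the Rayleigh quotient \eqref{e60} defining $\lb(B,b\alpha)$ with the constant $v\equiv |B|^{-1/p}$; since $\nabla v\equiv 0$ this gives
\[
\lb(B,b\alpha)\le b\alpha\,\frac{\Hm(\partial B)}{|B|},
\qquad\text{hence}\qquad
\limsup_{\alpha\downarrow 0}\alpha^{-1}\lb(B,b\alpha)\le b\,\frac{\Hm(\partial B)}{|B|}.
\]

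The matching lower bound is the only part that needs an argument. Fix an arbitrary sequence $\alpha_n\downarrow 0$ and, using \eqref{e60}, pick $v_n\in W^1_{p,p}(B,\partial B)$ with $v_n\ge 0$ (replace $v_n$ by $|v_n|$), $\int_B v_n^p=1$, and
\[
\int_B|\nabla v_n|^p + b\alpha_n\int_{\partial B} v_n^p\,d\Hm \le \lb(B,b\alpha_n)+\alpha_n^2.
\]
By the upper bound the right-hand side is $\le C\alpha_n$ once $\alpha_n\le 1$, so $\|\nabla v_n\|_{L^p(B)}^p\le C\alpha_n\to 0$. As $B$ is a bounded connected set with smooth boundary, the Poincar\'e--Wirtinger inequality gives $\bigl\|v_n-|B|^{-1}\!\int_B v_n\bigr\|_{L^p(B)}\le c(B,m,p)\,\|\nabla v_n\|_{L^p(B)}\to 0$; together with $\int_B v_n^p=1$ this forces, after passing to a subsequence, $v_n\to c:=|B|^{-1/p}$ in $L^p(B)$, and since the gradients vanish too, $v_n\to c$ strongly in $W^{1,p}(B)$. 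By continuity of the trace operator $W^{1,p}(B)\to L^p(\partial B)$ (valid since $\partial B$ is smooth) we obtain $\int_{\partial B} v_n^p\,d\Hm\to c^p\,\Hm(\partial B)=\Hm(\partial B)/|B|$. Dropping the nonnegative gradient term above,
\[
\alpha_n^{-1}\lb(B,b\alpha_n)\ge b\int_{\partial B} v_n^p\,d\Hm-\alpha_n\;\longrightarrow\; b\,\frac{\Hm(\partial B)}{|B|}.
\]
Since $\alpha_n\downarrow 0$ was arbitrary, $\liminf_{\alpha\downarrow 0}\alpha^{-1}\lb(B,b\alpha)\ge b\,\Hm(\partial B)/|B|$, and combined with the upper bound this yields $\lim_{\alpha\downarrow 0}\alpha^{-1}\lb(B,b\alpha)= b\,\Hm(\partial B)/|B|=mb$, where we used $\Hm(\partial B)=m|B|$ for the ball $B$.

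The one step carrying all the content is the convergence of the boundary integral: a priori the near-minimizers $v_n$ might oscillate or push mass towards $\partial B$ in a way that keeps $\int_{\partial B}v_n^p\,d\Hm$ away from its value on a constant, and the resolution is precisely that the constant test function forces $\lb(B,b\alpha_n)$ --- and hence $\|\nabla v_n\|_{L^p(B)}$ --- to be small, which upgrades $L^p(B)$-convergence to $W^{1,p}(B)$-convergence and then, through the continuous trace inequality, to $L^p(\partial B)$-convergence. Two routine points are worth recording: that the space $W^1_{p,p}(B,\partial B)$ of \cite{M} coincides here with $W^{1,p}(B)$ carrying the usual trace, which holds because $B$ has smooth boundary; and that the infimum in \eqref{e60} is actually attained on $B$ (not needed above, where near-minimizers suffice, but it follows from the direct method, the boundary term being weakly continuous because the trace $W^{1,p}(B)\to L^p(\partial B)$ is compact). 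For $p=2$ this reproduces, for the ball, the limit \eqref{e10}.
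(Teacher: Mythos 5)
Your proof is correct, but it takes a genuinely different route from the paper's. You sandwich $\alpha^{-1}\lb(B,b\alpha)$ directly: the constant test function gives the upper bound $b\,\Hm(\partial B)/|B|$, and for the lower bound you take near-minimizers, note that the upper bound forces $\lVert \nabla v_n\rVert_{L^p(B)}\to 0$, and then upgrade $L^p$-closeness to the constant $|B|^{-1/p}$ (via Poincar\'e--Wirtinger) to $W^{1,p}$-convergence and, by continuity of the trace, to convergence of the boundary integral. The paper instead exploits the radial structure of the ball: it takes the first Robin eigenfunction $u$ on $B(0,1)$, uses that $r\mapsto b(r)=|\nabla u(r)|^{p-1}/u(r)^{p-1}$ is continuous and increasing (quoting \cite{BD}) together with the invariance $\lb(B(0,r),b(r))=\lb(B(0,1),b)$, rescales to the unit ball, and identifies the limit by multiplying the eigenvalue equation by $u$, integrating over $B(0,r)$, dividing by $r^m$ and letting $r\downarrow 0$. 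What your route buys is independence of radial symmetry and of the eigenfunction regularity and monotonicity facts imported from \cite{BD}: the same argument gives $\lim_{\alpha\downarrow 0}\alpha^{-1}\lb(\Omega,b\alpha)=b\,\Hm(\partial\Omega)/|\Omega|$ for any bounded Lipschitz set $\Omega$, i.e.\ the $p$-version of \eqref{e10}, whereas the paper's proof stays within objects it already uses elsewhere (the first eigenfunction of the ball). One small point: your final simplification $\Hm(\partial B)=m|B|$ holds only for a ball of radius $1$; for radius $R$ the limit your argument (correctly) produces is $b\,\Hm(\partial B)/|B|=mb/R$. This normalization issue is already present in the paper's own statement of the lemma, whose proof likewise works on $B(0,1)$, and it is immaterial for the application in Theorem \ref{The3}, where only positivity and finiteness of the limit are used; the clean general statement is exactly the one you prove, $\lim_{\alpha\downarrow 0}\alpha^{-1}\lb(B,b\alpha)=b\,\Hm(\partial B)/|B|$.
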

\begin{proof}
{Let $u$ be the first normalized eigenfunction on $B(0,1)$}.
 The mapping $[0,1]\ni r \mapsto b(r)= \frac{|\nabla u(r)|^{p-1}}{u(r)^{p-1}}$ is increasing and continuous (see for instance \cite[Proposition 4.2]{BD}).
  Moreover, we have
\begin{equation*}
{ \lb (B(0,r), b(r))=\lb(B(0,1), b).}
\end{equation*}
By { re-scaling} we get that
\begin{equation*}
\frac{1}{r^p} \lb { (B(0,1), b(r) r^{p-1}) = \lb(B(0,1), b),}
\end{equation*}
so that
\begin{equation*}
\frac{1}{b(r)r^{p-1}} \lb ({ B(0,1)}, b(r) r^{p-1})
=\frac{r}{b(r)} \lb({ B(0,1)}, b).
\end{equation*}
It remains to prove that
\begin{equation*}
\lim_{r \downarrow 0} \frac{r}{b(r)} \lb({ B(0,1)}, b)=
\frac{\Hm(\partial { B(0,1)})}{|{ B(0,1)}|}.
\end{equation*}
Multiplying the identity $-\Delta_p u =\lb({ B(0,1)}, b)
|u|^{p-2}u$ with $u$ and integrating on ${ B(0,r)}$ we get
\begin{equation*}
\int_{{B(0,r)}} |\nabla u|^p  + b(r) \int_{\partial {
B(0,r)}} |u|^pd \Hm = \lb ({ B(0,1)},b)\int_{{
B(0,r)}}|u|^p.
\end{equation*}
Dividing by $r^m$ and passing to the limit $r \downarrow 0$, we get
\begin{equation*}
 \lim_{r \downarrow 0}\frac{b(r)}{r} \Hm(\partial { B(0,1)}) |u(0)|^p=  \lb ({ B(0,1)},b) |u(0)|^p |{ B(0,1)}|,
\end{equation*}
where we have used that $\nabla u (0)=0$ and $u(0)\not= 0$ by the regularity of $u$ inside the ball.
\end{proof}

Lemma \ref{Lem8} above extends \eqref{e9} to the $p$-Laplacian
with Robin boundary conditions in the special case of a ball.

\noindent\textit{Proof of Theorem \ref{The3}.}  As usual in the
search of $L^\infty$ estimates, we choose $u\wedge t:=\min
\{u,t\}$ as a test function in \eqref{e59}. We have that
\begin{align*}
&\int_ {\Omega}|\nabla u|^p  + b \int_{\partial \Omega} |u|^p d
\Hm -p \int_\Omega u   \nonumber \\ &  \le  \int_ {\Omega}|\nabla
(u\wedge t)|^p  + b \int_{\partial \Omega} |u\wedge t|^p d \Hm -p
\int_\Omega (u\wedge t).
\end{align*}
But
\begin{equation*}
\int_ {\Omega}|\nabla (u\wedge t)|^p  = \int_{\{0<u<t\}}|\nabla u|^p = \int_ {\Omega}|\nabla u|^p -\int_ {U_t}|\nabla u|^p ,
\end{equation*}
\begin{equation*}
\int_\Omega (u\wedge t) =\int_ {\Omega\sm U_t}u  + \int_ { U_t}t
=\int_ {\Omega}u  - \int_ {\Omega}(u-t)^+ ,
\end{equation*}
and
\begin{align*}
 \int _{\partial \Omega} |u\wedge t|^p d \Hm&= \int_{\partial \Omega \cap \{0<u<t\}} |u|^p d \Hm+ \int _{\partial \Omega \cap \{u\ge t\}} t^p d \Hm \nonumber \\ &
=\int _{\partial \Omega} |u|^p d \Hm-  \int _{\partial \Omega \cap \{u\ge t\}} (|u|^p-t^p) d \Hm \nonumber \\ &
=\int _{\partial \Omega} |u|^p d \Hm- \int_{\partial U_t}  (|u|^p-t^p) d \Hm.
 \end{align*}
This last equality is a consequence of the fact that for every $x \in \partial U_t \cap \Omega$, we have (from the continuity of $u$) that $u(x)=t$.
Finally,
\begin{equation*}
\int_{U_t} |\nabla (u-t)^+|^p  +b \int_{\partial U_t} (|(u-t)^+
+t|^p-t^p) d\Hm\le p \int_{U_t} (u-t)^+ ,
\end{equation*}
so that by the definition of the first Robin eigenvalue
\begin{align*}
\lb(U_t, b) \int_{U_t} | (u-t)^+|^p  &\le
\int_{U_t} |\nabla (u-t)^+|^p  +b \int_{\partial U_t} |(u-t)^+|^p d\Hm\nonumber \\ &\le p \int_{U_t} (u-t)^+ .
\end{align*}
Let $f(t)=\int_{U_t}(u-t)^+, t\ge 0 $, then H\"older's inequality
gives that
\begin{equation*}
\lb(U_t, b) f(t)^p\frac{1}{|U_t|^\frac{p}{p'}} \le p f(t).
\end{equation*}
Equivalently, introducing the { re-scaling} of $U_t$, we get
that
\begin{equation*}
f(t)^{p-1} h(t)^p \lb(U_t^\sharp, \frac{b}{h(t)^{p-1}}) \le p |U_t|^{p-1},
\end{equation*}
or
\begin{equation*}
f(t)^{p-1} h(t)^{p-1} \lb(U_t^\sharp, \frac{b}{h(t)^{p-1}})|\Om|^{\frac{1}{m}} \le p |U_t|^{p-1+\frac{1}{m}}.
\end{equation*}
Since $f'(t)= -|U_t|$, we get that
\begin{equation*}
\frac{|\Om|^{\frac{1}{m(p-1)+1}}} {p^{\frac{m}{m(p-1)+1}}} \big
[h(t)^{p-1} \lb(U_t^\sharp, \frac{b}{h(t)^{p-1}})\big ]
^{\frac{m}{m(p-1)+1}}
\le-\frac{f'(t)}{f(t)^{\frac{m(p-1)}{m(p-1)+1}}}.
\end{equation*}
Integrating this differential inequality between $0$ and some
value $T <\lVert u\rVert_{L^{\infty}(\Omega)}$, we get that
\begin{align*}
 \frac{|\Om|^{\frac{1}{m(p-1)+1}}} {p^{\frac{m}{m(p-1)+1}}}&  \int _0^T  \left(h(t)^{p-1} \lb(U_t^\sharp, \frac{b}{h(t)^{p-1}})\right) ^{\frac{m}{m(p-1)+1}} dt \nonumber \\ & \le
(m(p-1)+1) (f(0)^{\frac{1}{m(p-1)+1}} -f(T)^{\frac{1}{m(p-1)+1}}).
\end{align*}
By the isoperimetric inequality \eqref{e61} and the positivity of $f$ we obtain that
\begin{align}\label{e86}
\frac{|\Om|^{\frac{1}{m(p-1)+1}}} {p^{\frac{m}{m(p-1)+1}}} & \int _0^T  \left(h(t)^{p-1} \lb(\Omega^*, \frac{b}{h(t)^{p-1}})\right) ^{\frac{m}{m(p-1)+1}} dt \nonumber \\ & \le
(m(p-1)+1) f(0)^{\frac{1}{m(p-1)+1}}.
\end{align}
Since
\begin{align}\label{e87}
\|u_b\|_{L^1(\Omega)}& =  \int_\Om |\nabla u_b|^p  +b \int_{\partial
\Om} |u_b|^p d\Hm \nonumber \\ & \ge \lb (\Om, b) \|u_b\|_{L^p(\Omega)}^p \nonumber \\ & \ge  \lb
(\Om, b) \frac{\|u_b\|_{L^1(\Omega)}^p}{|\Om|^{p-1}},
\end{align}
we have that
\begin{equation}\label{e88}
\|u_b\|_{L^1(\Omega)} \le  \frac{|\Om|}{\lb (\Om,
b)^{\frac{1}{p-1}}}.
\end{equation}
By \eqref{e86}, \eqref{e87}, \eqref{e88} and the fact that
$f(0)=\lVert u \rVert_{L^1(\Omega)}$ we conclude that
\begin{equation}\label{e89}
\frac{|\Om|^{\frac{1}{m(p-1)+1}}} {p^{\frac{m}{m(p-1)+1}}}  \int _0^T  \left(h(t)^{p-1} \lb(\Omega^*, \frac{b}{h(t)^{p-1}})\right) ^{\frac{m}{m(p-1)+1}} dt \le \frac{c_2}{\lb (\Om, b)^{c_3}}.
\end{equation}
Since $h(t) \rightarrow +\infty $ as $t \uparrow
\|u\|_{L^{\infty}(\Omega)}$, we have by Lemma \ref{Lem8} that
\begin{equation*}
\lim_{t \uparrow \|u\|_{\infty }}h(t)^{p-1} \lb (\Omega ^*, \frac{b}{h(t)^{p-1}})=mb.
\end{equation*}
Hence $\|u_b\|_{L^{\infty}(\Omega) }<\infty$. Then choosing $T=
\|u_b\|_{L^{\infty}(\Omega) }$ in \eqref{e89} completes the proof
of Theorem \ref{The3}.\hspace*{\fill }$\square $

In analogy with \eqref{e2} we define the torsional rigidity for
the $p$-Robin torsion function by
\begin{equation*}
P(\Omega,b)=\int_{\Omega}u_b,
\end{equation*}
where $u_b$ is a minimiser of \eqref{e59}. It is easily seen that $u_b\ge 0$. Hence $P(\Omega,b)=\lVert u \rVert_{L^1(\Omega)}.$
\begin{theorem}\label{The2}
If $\Omega$ is an open set in $\R^m, m=2,3,\cdots$ with $|\Omega|<\infty,$ and if $p>1$ and
$b>0$ then
\begin{equation*}
b^{-1/(p-1)}|\Omega|^{p/(p-1)}\Hm(\partial \Omega)^{-1/(p-1)}\le
P(\Omega,b)\le \lambda(\Omega,b)^{-1/(p-1)}|\Omega|.
\end{equation*}
\end{theorem}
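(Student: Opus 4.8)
The plan is to establish the two inequalities separately, using the variational characterisation of $u_b$ and the first Robin eigenvalue, together with Hölder's inequality.

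For the upper bound, I would recycle the computation already carried out in \eqref{e87}--\eqref{e88} of the proof of Theorem \ref{The3}. Testing the weak formulation $-\Delta_p u_b = 1$ with the solution $u_b$ itself (equivalently, using that $u_b$ minimises \eqref{e59}, so the Euler--Lagrange identity gives $\int_\Om |\nabla u_b|^p + b\int_{\partial\Om}|u_b|^p\,d\Hm = p\int_\Om u_b$... more precisely, the scaling $\int_\Om |\nabla u_b|^p + b\int_{\partial\Om}|u_b|^p\,d\Hm = \int_\Om u_b$ after the normalisation in \eqref{e59}) yields
$$\|u_b\|_{L^1(\Om)} = \int_\Om |\nabla u_b|^p + b\int_{\partial\Om}|u_b|^p\,d\Hm \ge \lb(\Om,b)\,\|u_b\|_{L^p(\Om)}^p \ge \lb(\Om,b)\,\frac{\|u_b\|_{L^1(\Om)}^p}{|\Om|^{p-1}},$$
where the last step is Hölder's inequality on the finite-measure set $\Om$. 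Dividing through by $\|u_b\|_{L^1(\Om)}$ (which is finite and positive by Theorem \ref{The3}) and rearranging gives $P(\Om,b) = \|u_b\|_{L^1(\Om)} \le \lb(\Om,b)^{-1/(p-1)}|\Om|$, which is exactly \eqref{e88}.

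For the lower bound, the idea is to use a constant test function. Plugging $v \equiv c$ into the energy \eqref{e59} gives $b\,c^p\,\Hm(\partial\Om) - p\,c\,|\Om|$, and minimising this scalar expression over $c > 0$ gives the optimal value $c_* = \big(|\Om|/(b\,\Hm(\partial\Om))\big)^{1/(p-1)}$. Since $u_b$ is the minimiser, comparing energies and then using that $\int_\Om u_b = P(\Om,b)$ should produce a bound of the right form; alternatively, and perhaps more cleanly, one can compare via the eigenvalue: the constant function shows $\lb(\Om,b) \le b\,\Hm(\partial\Om)/|\Om|$, and substituting this into the already-proven relation $P(\Om,b)^{p-1}\lb(\Om,b) \ge \|u_b\|_{L^1}^{p-1}\lb(\Om,b)$... actually the cleanest route is: from the first displayed chain above, $\|u_b\|_{L^1(\Om)} = \int_\Om|\nabla u_b|^p + b\int_{\partial\Om}|u_b|^p\,d\Hm$, and one needs a lower bound on this energy. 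I would instead directly use the monotonicity/comparison: since $P(\Om,b) = \int_\Om u_b$ and $u_b$ minimises \eqref{e59}, evaluate \eqref{e59} at $u_b$ versus at the constant $c_*$ and extract the $L^1$ information. Writing $E(v) = \int_\Om|\nabla v|^p + b\int_{\partial\Om}|v|^p\,d\Hm - p\int_\Om v$, we have $E(u_b) \le E(c_*) = -(p-1)\big(|\Om|^p/(b\,\Hm(\partial\Om))\big)^{1/(p-1)}$, while $E(u_b) = -\int_\Om u_b = -P(\Om,b)$ (using the Euler--Lagrange identity to cancel the gradient and boundary terms against part of $p\int_\Om u_b$). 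Hence $P(\Om,b) \ge (p-1)\big(|\Om|^p/(b\,\Hm(\partial\Om))\big)^{1/(p-1)} \ge b^{-1/(p-1)}|\Om|^{p/(p-1)}\Hm(\partial\Om)^{-1/(p-1)}$, since $p-1$ may be less than $1$ — wait, this requires $p \ge 2$. For $1 < p < 2$ one has $p - 1 < 1$ and the factor $(p-1)$ works against us, so I would instead argue the lower bound directly from $P(\Om,b) = \|u_b\|_{L^1(\Om)} \ge$ (something) without the $(p-1)$ prefactor: test with $c_*$ in the eigenvalue quotient to get $\lb(\Om,b) \le b\Hm(\partial\Om)/|\Om|$, and combine with a reverse estimate.

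The main obstacle I anticipate is the lower bound, specifically getting the constant exactly $1$ (no dimensional or $p$-dependent prefactor) as stated. The resolution is likely the following slick argument: the constant function $c$ is an admissible competitor for the energy, and one shows $\int_\Om u_b \ge$ the first moment contributed by this competitor; more directly, by the weak maximum principle or by a pointwise comparison, $u_b(x) \ge c_* = (|\Om|/(b\Hm(\partial\Om)))^{1/(p-1)}$ fails in general, so instead one integrates the equation against the constant test function $1$: testing $-\Delta_p u_b = 1$ with $\vphi \equiv 1$ (extended appropriately, using the Robin boundary condition $|\nabla u|^{p-2}\partial u/\partial n = -b|u|^{p-2}u$) gives $b\int_{\partial\Om}|u_b|^{p-2}u_b\,d\Hm = |\Om|$ by the divergence theorem. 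Then Hölder on $\partial\Om$: $|\Om| = b\int_{\partial\Om}u_b^{p-1}\,d\Hm \le b\,\Hm(\partial\Om)^{1/p'}\big(\int_{\partial\Om}u_b^{p}\,d\Hm\big)^{(p-1)/p}$... this bounds the boundary $L^p$ norm from below, and then one relates $\int_{\partial\Om}u_b^p$ to $P(\Om,b) = \int_\Om u_b$ through the eigenvalue relation and another Hölder step. Assembling these gives the claimed clean constant; this chain of Hölder inequalities on $\Om$ and on $\partial\Om$, carefully keeping the exponents $p$ and $p' = p/(p-1)$ straight, is the technical heart of the argument.
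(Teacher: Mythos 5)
Your upper bound is fine and is, in substance, the paper's own argument: the minimiser identity $\int_\Om|\nabla u_b|^p+b\int_{\partial\Om}|u_b|^p\,d\Hm=\int_\Om u_b$ combined with the eigenvalue bound from \eqref{e60} and H\"older on $\Om$ is exactly \eqref{e87}--\eqref{e88}; the paper's proof of Theorem \ref{The2} packages the same two inequalities into the variational characterisation $P(\Om,b)^{p-1}=\sup_v\bigl(\int_\Om|\nabla v|^p+b\int_{\partial\Om}|v|^p\,d\Hm\bigr)^{-1}\bigl\lvert\int_\Om v\bigr\rvert^p$ over $v\in W^1_{p,p}(\Om,\partial\Om)\setminus\{0\}$.

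The gap is in the lower bound, and it stems from an arithmetic slip that made you abandon a correct route. With $E(v)=\int_\Om|\nabla v|^p+b\int_{\partial\Om}|v|^p\,d\Hm-p\int_\Om v$, the Euler--Lagrange identity gives $E(u_b)=\int_\Om u_b-p\int_\Om u_b=-(p-1)P(\Om,b)$, not $-P(\Om,b)$. Inserting the correct value into $E(u_b)\le E(c_*)=-(p-1)\bigl(|\Om|^p/(b\,\Hm(\partial\Om))\bigr)^{1/(p-1)}$ makes the factor $(p-1)$ cancel on both sides, and you get $P(\Om,b)\ge b^{-1/(p-1)}|\Om|^{p/(p-1)}\Hm(\partial\Om)^{-1/(p-1)}$ with constant $1$ for every $p>1$; the worry about $1<p<2$ is spurious, so there was no need for the second, unfinished chain of estimates. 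That alternative sketch (test with $\vphi\equiv1$, H\"older on $\partial\Om$, then ``assemble'') is in any case left incomplete as written, and the missing step is not the eigenvalue relation you invoke: it closes by dropping the gradient term in $P(\Om,b)=\int_\Om|\nabla u_b|^p+b\int_{\partial\Om}u_b^p\,d\Hm\ge b\int_{\partial\Om}u_b^p\,d\Hm$ and combining with $b\int_{\partial\Om}u_b^{p-1}\,d\Hm=|\Om|$ and H\"older on $\partial\Om$. For comparison, the paper's lower bound is a one-liner: take $v\equiv1$ in the variational characterisation above to get $P(\Om,b)^{p-1}\ge|\Om|^p/(b\,\Hm(\partial\Om))$ (the constant is admissible when $\Hm(\partial\Om)<\infty$, and otherwise the claimed bound is trivial).
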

\begin{proof}
The infimum in \eqref{e59} is attained by the $p$-torsion function $u_b\in W_{p,p}^1(\Omega)$. We have the following
variational characterization.
\begin{equation*}
P(\Omega,b)^{p-1}=\sup\left(\int_{\Omega} |\nabla v|^p +b
\int_{\partial\Omega} |v|^p
d\Hm\right)^{-1}\left\lvert\int_{\Omega} v \right\rvert^p,
\end{equation*}
where the supremum is over all $v\in
W_{p,p}^1(\Omega)\setminus\{0\}$. To prove the lower bound we
choose the test function $v=1$. To prove the upper bound we have
by H\"older's inequality that $\left(\int_{\Omega} v\right)^p\le
|\Omega|^{p-1}\int_{\Omega}|v|^p$. The variational
characterization of  $\lambda(\Omega,b)$ in \eqref{e60} gives that
$\int_\Om |\nabla v|^p  +b \int_{\partial \Om} |v|^pd\Hm\ge
\lambda(\Omega,b)\int_{\Omega}|v|^p$, which completes the proof.
\end{proof}

\begin{remark}\label{0989.5}
{\rm If $\Om$ is an open set with non-smooth boundary then the
space $W^1_{p,p}(\Om, \partial \Om)$ does not lead to the natural
relaxation of the Robin problem. Precisely if $\Om$ is a Lipschitz
set from which one removes a Lipschitz crack, in the space
$W^1_{p,p}(\Om, \partial \Om)$ all functions have the same trace
on both sides of the crack, while one can write properly the Robin
problem in the Sobolev space $W^{1,p}(\Om)$ which is much larger
than $W^1_{p,p}(\Om, \partial \Om)$.  A suitable relaxation of the
Robin problem, based on the special functions with bounded
variations, was introduced in \cite{BG} to deal with these
situations (see \cite{AFP} for details). The torsional rigidity
could be defined on the open bounded sets in $\R^m$ by $
P(\Omega)=\int_{\Om}u dx$,  where $u$ is any minimizer of
\begin{equation*}
 v \mapsto \int_\Om |\nabla v|^p  +b \int_{J_v}  (|v^+|^p+|v^-|^p) d\Hm -p \int_\Om v ,
\end{equation*}
among all  non-negative  functions $v \in L^p(\R^m)$ such
that $v^p \in SBV(\R^m)$, $v=0$ a.e. on $\R^m \setminus \Om$,
$\Hm(J_v\sm
\partial \Om)=0$. Above $J_v$ is the set of jump points of $v$ and
$v^+$ and $v^-$ stand for the upper and lower approximate limits
of $v$ in a point of the jump set. A similar estimate as the one in \eqref{e66} can be obtained in the SBV framework, by replacing the $W^1_{p,p}$-Robin
eigenvalues with the SBV-Robin eigenvalues.

}
\end{remark}

\end{document}